\newtheorem{thm}{Theorem}[section]
\newtheorem{cor}[thm]{Corollary}
\newtheorem{lem}[thm]{Lemma}
\newtheorem{prop}[thm]{Proposition}
\newtheorem{teo}[thm]{Theorem}
\theoremstyle{definition}
\newtheorem{defi}[thm]{Definition}
\newtheorem{rem}[thm]{Remark}
\DeclareMathOperator{\Disp}{Disp}
\DeclareMathOperator{\proj}{proj}
\DeclareMathOperator{\Iso}{Iso}
\DeclareMathOperator{\supp}{supp}
\DeclareMathOperator{\Geo}{Geo}
\DeclareMathOperator{\OptGeo}{OptGeo}
\DeclareMathOperator{\Tan}{Tan}
\DeclareMathOperator{\Lip}{Lip}
\DeclareMathOperator{\Ch}{Ch}
\DeclareMathOperator{\vol}{vol}
\DeclareMathOperator{\Img}{Img}
\DeclareMathOperator{\Id}{Id}
\DeclareMathOperator{\Fix}{Fix}
\begin{document}

 
\title{On the isometry group of $RCD^*(K,N)$-spaces.}

\author[L. Guijarro]{Luis Guijarro 
}
\address{Department of Mathematics, Universidad Aut\'{o}noma de Madrid, and
ICMAT CSIC-UAM-UCM-UC3M }
\email{luis.guijarro@uam.es}
\urladdr{http://www.uam.es/luis.guijarro}
\thanks{Both authors were supported by research grants MTM2014-57769-3-P and  MTM2017-85934-C3-2-P
(MINECO) and ICMAT Severo Ochoa project SEV-2015-0554 (MINECO). The second author was supported by a PhD Scholarship awarded by Conacyt.}
\author[J. Santos-Rodr\'iguez]{Jaime Santos-Rodr\'iguez
}
%
%
%

\address{Department of Mathematics, Universidad Aut\'onoma de Madrid, Spain}
\email{jaime.santosr@estudiante.uam.es}

\date{\today}


\subjclass[2010]{53C23, 53C20}
\keywords{Ricci curvature dimension, metric measure space,  isometry group}


\begin{abstract}
We prove that the group of isometries of a metric measure space that satisfies  the  Riemannian curvature condition, $RCD^*(K,N),$ is a Lie group. We obtain an optimal upper bound on the dimension of this group, and classify the spaces where this maximal dimension is attained.
\end{abstract}
\maketitle

\section{Introduction.}

The notion of synthetic  lower Ricci curvature bounds was independently defined for metric measure spaces by  Sturm  (\cite{SturmI}, \cite{SturmII}) and by Lott-Villani (\cite{Vill}). This condition, called \emph{curvature-dimension condition}  $CD(K,N),$ is given in terms of the optimal transport between probability measures on a metric space and of the convexity of an entropy functional. It is known that a Riemannian manifold $(M,g)$ having Ricci curvature bounded below is equivalent to the metric measure space $(M,d_g, d\vol_g)$ being a $CD(K,N)$ space.  

We say that a metric measure space $(X,d,\mathfrak{m})$ satisfies the Riemannian curvature dimension condition $RCD(K,N)$ (also called \emph{infinitesimally Hilbertian})  if the associated Sobolev space $W^{1,2}$ is a Hilbert space \cite{Gigli2}. This condition rules out Finsler geometries and provides good  structural results such as an extension of the classical Splitting theorem of Cheeger-Gromoll done by Gigli \cite{Gigli}. In fact, this last result does not  extend to the larger class of  $CD(K,N)$ spaces as one can see by considering the case of $(\mathbb{R}^n,d_{\|\cdot\|}, \mathcal{L}^n),$ where $d_{\|\cdot\|}$ is a distance induced by a non-Euclidean norm and $\mathcal{L}^n$ is the Lebesgue measure. If the norm does not come from a interior product then the space is $CD(0,n)$, has plenty of lines, but does not split. For details  we refer the reader to  \cite{Vill}.
  
In \cite{BachSturm} Bacher-Sturm defined via some distortion coefficients a curvature dimension condition that has better local-to-global and tensorization properties. This condition is called \emph{reduced curvature dimension condition} and is denoted by $CD^*(K,N)$.  It is worth noting that  every $CD(K,N)$ space is also a $CD^*(K,N)$ space, and that a $CD^*(K,N)$ space is $CD(\frac{N-1}{N}K,N)$.  A space that is both $CD^*(K,N)$ and infinitesimally Hilbertian will be denoted as $RCD^*(K,N)$. 
Gigli-Mondino-Rajala \cite{GigliMondRaj} proved that almost any point in an 
 $RCD^*(K,N)$-space has a tangent cone (not necessarily unique) that is Euclidean. This was later improved by Mondino and Naber \cite{MonNab} to an stratification of the space into subsets of points with unique Euclidean tangent spaces, though the dimension of  such tangents might vary point-wise. 
%

Isometric actions on Riemannian manifolds have been a useful tool to investigate the interaction between the topology and the geometry of a Riemannian metric. 
A major result in this area  is the  theorem of Myers-Steenrod \cite{MyersSteen} stating that the isometry group of a Riemannian manifold is  a Lie group.  A good reference for this and similar results is \cite{Koba}.

The aim of this paper is to study the isometry group of $RCD^*(K,N)$-spaces. 
We start by obtaining an analogue of the Myers-Steenrod Theorem. 

\begin{restatable}{teorema}{liegroup}
\label{thm:lie group}
Let $(X,d_{X},\mathfrak{m})$ be an $RCD^{*}(K,N)-$space with $K\in \mathbb{R}$,  $N\in [1,\infty)$. Then 
 the isometry group of $X$ is a Lie group.
\end{restatable}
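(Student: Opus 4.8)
The plan is to realize $\Iso(X)$ as a locally compact topological group and then invoke the Gleason--Montgomery--Zippin solution of Hilbert's fifth problem in the form: a locally compact group is a Lie group if and only if it has \emph{no small subgroups} (NSS). The whole difficulty is thus concentrated in verifying the NSS condition, and this is where the infinitesimally Euclidean structure provided by Gigli--Mondino--Rajala and Mondino--Naber enters.

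First I would topologize $G := \Iso(X)$ with the compact-open topology, which here coincides with uniform convergence on bounded sets and makes $G$ a topological group. Since $N<\infty$, the generalized Bishop--Gromov inequality forces $\mathfrak{m}$ to be locally doubling, so the geodesic space $(X,d_X)$ is proper. Isometries are $1$-Lipschitz with $1$-Lipschitz inverses, hence any family of them is equicontinuous, and a family that is bounded at a single point is bounded on every bounded set. By Arzel\`a--Ascoli every such family is precompact; consequently $G$ is locally compact and each stabilizer $G_p$ is compact. I would also record that isometries preserve $\mathfrak{m}$ (by definition of an isometry of a metric measure space), so that they respect the regular sets and tangent cones used below.

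To establish NSS I would differentiate the action at a regular point. By the cited structure theory, $\mathfrak{m}$-almost every $p\in X$ lies in some $k$-regular set, i.e.\ $\Tan_p X \cong (\mathbb{R}^k, d_{eucl}, 0)$ as a pointed measured space. Fixing such a $p$, every $g\in G_p$ induces, by rescaling the metric by $1/r$ and passing to the measured Gromov--Hausdorff limit, a well-defined isometry $dg_p$ of $\Tan_p X\cong\mathbb{R}^k$ fixing the origin, that is, an element of $O(k)$; the assignment $g\mapsto dg_p$ is a continuous homomorphism $G_p\to O(k)$. The key rigidity step is its injectivity: if $dg_p=\Id$, then $g$ fixes the initial direction of every geodesic issuing from $p$ and preserves distances to $p$, so it fixes each such geodesic and hence, since $X$ is a proper geodesic space in which every point is joined to $p$ by a geodesic, $g=\Id$. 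Thus $G_p$ embeds as a closed subgroup of the Lie group $O(k)$ and is itself a Lie group, in particular NSS.

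The remaining, and genuinely delicate, point is to upgrade NSS from the isotropy $G_p$ to all of $G$. Here I would combine the compact Lie structure of $G_p$ with the local description of the orbit $G\cdot p\cong G/G_p$: blowing up the group and the orbit simultaneously realizes a neighborhood of the identity in $G$ as acting on $\Tan_p X\cong\mathbb{R}^k$ through $\Iso(\mathbb{R}^k)=O(k)\ltimes\mathbb{R}^k$, which is a Lie group and therefore NSS, so that a hypothetical small subgroup of $G$ would project to a small subgroup there and, by the rigidity above, be forced to act trivially, contradicting effectiveness. Once $G$ is known to be NSS, the Gleason--Montgomery--Zippin theory gives that $G$ is a Lie group. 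The main obstacle I anticipate is precisely this last transfer: controlling the blow-up of the full (non-isotropy) action, coping with the a priori pointwise-varying tangent dimension $k$ and the non-uniqueness of tangent cones off the regular set, and making the linearization of the action near $p$ rigorous in the purely metric-measure setting.
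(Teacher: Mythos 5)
Your skeleton---compact-open topology, local compactness, reduction to a no-small-subgroups statement via Gleason--Montgomery--Zippin, and a blow-up at a Mondino--Naber regular point so as to land in $\Iso(\mathbb{R}^k)$---is the same as the paper's (which runs it as a contradiction via Theorem \ref{GleasonYamabe}). But both places where you locate the actual content have genuine gaps. First, the ``differential'' homomorphism $G_p\to O(k)$: uniqueness of the tangent cone at $p$ gives a unique limit \emph{space}, not a unique limit of the rescaled \emph{maps}, so $dg_p$ is a priori only defined along subsequences, and neither the homomorphism property nor continuity comes for free. More seriously, your injectivity argument presupposes that $X$ is non-branching. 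From $dg_p=\Id$ you only get $d(g\gamma_t,\gamma_t)=o(t)$ for geodesics $\gamma$ issuing from $p$; in a branching space two distinct geodesics from $p$ can agree to first order (indeed on a whole initial segment), so ``fixes the initial direction, hence fixes the geodesic'' fails. Whether $RCD^*(K,N)$ spaces are non-branching is open: what is known is only \emph{essential} non-branching of optimal plans (Theorem \ref{non-branch}). This is exactly why the paper replaces your step by Lemma \ref{lem.fixedpoints} ($\Fix(g)$ has $\mathfrak{m}$-measure zero for $g\neq\Id$), proved by transporting the normalized restriction of $\mathfrak{m}$ to $\Fix(g)$ onto an isotropy-orbit measure and invoking uniqueness of the optimal plan together with injectivity of the evaluation maps on the non-branching set carrying it. (Also, measure preservation is not ``by definition'': the theorem concerns isometries of the metric space $(X,d_X)$, which need not preserve $\mathfrak{m}$; fortunately neither the paper's argument nor, ultimately, yours needs it.)

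The decisive gap is the transfer from $G_p$ to $G$, which you acknowledge but whose sketch cannot produce a contradiction. If you blow up a putative sequence of small subgroups $\Gamma_n$ at scales chosen independently of the $\Gamma_n$, the limit group in $\Iso(\mathbb{R}^k)$ will in general be \emph{trivial}---small subgroups have displacement tending to zero, so they simply vanish in the limit---and ``the limit acts trivially'' contradicts nothing: effectiveness of the action of $G$ on $X$ says nothing about the limit action. The missing idea is the scale-matching of Lemma \ref{lem:displacement}: by continuity of $s\mapsto\Disp(\Gamma_n,s,p)$ and the intermediate value theorem, one chooses $\lambda_n\to 0$ with $\Disp(\Gamma_n,\lambda_n,p)=\lambda_n/20$, at a point $p$ lying both in $(\mathcal{R}_k)_{\epsilon,\delta}$ and (by Lemma \ref{lem.fixedpoints}) outside every $\Fix(\Gamma_n)$. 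Rescaling by $\lambda_n^{-1}$ and passing to the equivariant limit (Theorem \ref{thm:fukaya-yamaguchi}) then yields a closed subgroup $\Gamma\leq\Iso(\mathbb{R}^k)$ which is simultaneously nontrivial (some element moves some point of the half-ball by about $1/20$) and satisfies $\Disp(\Gamma,1,0)\leq 1/20$. The contradiction is then with the rigidity of Euclidean isometries: since they are affine, a nontrivial compact subgroup of $\Iso(\mathbb{R}^k)$ must displace some point of any ball of radius $1/2$ by a definite universal amount, far exceeding $1/20$---not with effectiveness. Without tying the blow-up scale to the group, this normalization (the heart of the paper's proof) is absent, and your final step remains a restatement of what has to be proved.
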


This theorem was proved in \cite{CheegCol2} for non-collapsed Gromov-Hausdorff limits of Riemannian manifolds with lower Ricci curvature bounds. 


 
 We  also bound the dimension of the isometry group in terms of $N$.

\begin{restatable}{teorema}{dimensionbound} 
\label{thm:dimension}
Let $(X,d_{X},\mathfrak{m})$ be an $RCD^*(K,N)$ space with $K,N\in \mathbb{R}$ and $N<\infty$. Then the Hausdorff dimension of  the isometry group of $X$ is bounded above by $\frac{{\lfloor N \rfloor} ({\lfloor N \rfloor} +1) }{2}$.
\end{restatable}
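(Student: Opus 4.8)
The plan is to run the classical orbit--stabilizer dimension count, but with the Riemannian input replaced by the infinitesimal structure theory of $RCD^*(K,N)$-spaces. Set $G=\Iso(X)$, which is a Lie group by Theorem~\ref{thm:lie group}; equipping $G$ with a left--invariant Riemannian metric (any two being locally bi-Lipschitz equivalent), its Hausdorff dimension coincides with its dimension as a smooth manifold, so it suffices to bound $\dim G$. Since $RCD^*(K,N)$-spaces are proper, $G$ acts properly by isometries; thus for each $p\in X$ the isotropy group $G_p=\{g\in G:\,gp=p\}$ is compact, $G/G_p$ is a smooth manifold of dimension $\dim G-\dim G_p$, and the orbit map $gG_p\mapsto gp$ is a topological embedding onto the orbit $G\cdot p$.

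First I would choose the base point well. By Gigli--Mondino--Rajala together with the Mondino--Naber stratification, $\mathfrak m$-a.e.\ point is $k$-regular for some $k$ (unique tangent cone $\Tan_p(X)=(\mathbb{R}^k,d_{\mathrm{eucl}},o)$), the regular stratum $\mathcal{R}_k$ is $k$-rectifiable, and any Euclidean tangent forces $k\le N$, hence $k\le\lfloor N\rfloor$. Fix such a $k$-regular point $p$. Because isometries preserve tangent cones, $gp$ is again $k$-regular for every $g$, so $G\cdot p\subseteq\mathcal{R}_k$. Topological dimension is a homeomorphism invariant bounded above by Hausdorff dimension (Szpilrajn), and the orbit is homeomorphic to $G/G_p$; therefore
\[
\dim G-\dim G_p=\dim_{\mathrm{top}}(G\cdot p)\le\dim_{\mathcal H}(G\cdot p)\le\dim_{\mathcal H}\mathcal{R}_k=k.
\]

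Next I would linearise the isotropy action. Each $g\in G_p$ fixes $p$ and commutes with the rescalings defining $\Tan_p(X)$, so it induces an isometry $dg_p$ of $\mathbb{R}^k$ fixing $o$, i.e.\ an element of $O(k)$, and $\rho\colon G_p\to O(k)$, $g\mapsto dg_p$, is a continuous homomorphism. The key claim is that $\rho$ is \emph{faithful}. I would prove this as in the Riemannian case: take $g\in\ker\rho$; for $\mathfrak m$-a.e.\ $q$ there is a unique minimizing geodesic $\gamma$ from $p$ to $q$, with a well-defined initial direction in $\Tan_p(X)$. Since $g$ fixes $p$, preserves length, and has $dg_p=\mathrm{id}$, the curve $g\circ\gamma$ is again a minimizing geodesic from $p$ with the same length and initial direction; the a.e.\ uniqueness of geodesics and essential non-branching of $RCD$-spaces then force $g\circ\gamma=\gamma$, so $gq=q$. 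This holds on a set of full measure, which is dense in $\supp\mathfrak m=X$, so $g=\mathrm{id}$ by continuity. Hence $\rho$ is injective and $\dim G_p\le\dim O(k)=\tfrac{k(k-1)}{2}$.

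Putting the estimates together,
\[
\dim G\le k+\frac{k(k-1)}{2}=\frac{k(k+1)}{2}\le\frac{\lfloor N\rfloor(\lfloor N\rfloor+1)}{2},
\]
the last step because $k\le\lfloor N\rfloor$ and $t\mapsto t(t+1)/2$ is increasing. I expect the main obstacle to be the faithfulness of $\rho$: the Riemannian argument ``an isometry fixing $p$ with $dg_p=\mathrm{id}$ is the identity'' rests on the exponential map, so in this setting it must be replaced by the a.e.\ uniqueness of geodesics, essential non-branching, and the identification of geodesic directions at a regular point with points of the Euclidean tangent. A secondary point is to guarantee that the orbit map is a genuine topological embedding, so that the topological-dimension comparison is legitimate; this is where properness of the $G$-action is used.
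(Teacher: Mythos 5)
Your orbit--stabilizer skeleton is the same as the paper's: there too one writes $\dim G=\dim G/G_{p_0}+\dim G_{p_0}$, embeds a compact piece of $G/G_{p_0}$ topologically into $X$ via the Myers--Steenrod map $g\mapsto gp_0$, and gets $\dim G/G_{p_0}\le \lfloor N\rfloor$ from $\dim_T\le\dim_{\mathcal{H}}\le N$. (Incidentally, your sharper orbit bound through $\mathcal{R}_k$ is not justified: the Mondino--Naber structure theory controls $\mathcal{R}_k$ only modulo an $\mathfrak{m}$-null set, so $\dim_{\mathcal{H}}\mathcal{R}_k=k$ is not known; this is harmless, since $\dim_{\mathcal{H}}(G\cdot p)\le\dim_{\mathcal{H}}X\le N$ suffices.) The genuine gap is the isotropy estimate $\dim G_p\le k(k-1)/2$, which rests entirely on the linearization $\rho\colon G_p\to O(k)$, $g\mapsto dg_p$, being a well-defined, continuous, \emph{faithful} homomorphism; neither property is available in an $RCD^*(K,N)$ space. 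The ``differential'' $dg_p$ can only be produced as a subsequential equivariant Gromov--Hausdorff limit of the actions of $g$ on $(X,r^{-1}d,p)$, and is well defined only up to the $O(k)$-ambiguity in identifying the tangent cone with $\mathbb{R}^k$; making these choices coherently so that $g\mapsto dg_p$ is a homomorphism is already unproved. Much worse is faithfulness: your argument needs that a geodesic from $p$ is determined by its initial direction, so that $dg_p=\Id$ forces $g\circ\gamma=\gamma$. Essential non-branching does \emph{not} give this: it forbids two geodesics that agree on a nondegenerate segment from separating afterwards, whereas $\gamma$ and $g\circ\gamma$ meet only at the single point $p$. In the Riemannian case this step comes from the exponential map, and in the Alexandrov case from angle comparison (two geodesics from a point at angle zero coincide); no such angle rigidity exists for $RCD^*$ spaces. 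Nor can the small-subgroup machinery of Section 4 detect $H=\ker\rho$: since $H$ fixes $p$, $\Disp(H,0,p)=0$ and, if all blow-ups of elements of $H$ at $p$ were trivial, $\Disp(H,r,p)=o(r)$, so the intermediate-value argument of Lemma \ref{lem:displacement} produces nothing.

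This is exactly where the paper diverges. Proposition \ref{prop:isotropy upper bound} bounds $\dim_T G_{p_0}$ by $\lfloor N\rfloor(\lfloor N\rfloor-1)/2$ without ever linearizing at a point: it chooses auxiliary points $p_1,\dots,p_{\lfloor n\rfloor-1}$, uses the coarea-type inequality of Ambrosio--Tilli for the Lipschitz maps $F_\ell$ built from distance functions to force each successive orbit $\left(G_{p_0}\cap\cdots\cap G_{p_{\ell-1}}\right)\cdot p_\ell$ into a level set of Hausdorff dimension at most $n-\ell$, and finally shows that the identity component of $G_{p_0}\cap\cdots\cap G_{p_{\lfloor n\rfloor-1}}$ fixes a whole open ball. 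Only \emph{then} do a.e.-unique geodesics plus non-branching apply legitimately: a geodesic $\gamma$ from $p_0$ and its image $h\gamma$ agree on the initial segment inside the fixed ball, so if they separated afterwards the geodesic would branch, a contradiction. That open-set argument is the correct substitute in this setting for your ``$dg_p=\Id\Rightarrow g=\Id$'' step; to keep your approach you would first need to prove the missing rigidity statement (an isometry fixing $p$ whose blow-ups at $p$ are all trivial is the identity), which is essentially as hard as the proposition itself.
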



 Finally, we describe the spaces whose isometry group attains the maximal dimension described in the previous Theorem. 

\begin{restatable}{teorema}{maximaldimension}
\label{thm:rigidity}
Let $(X, d_{X}, \mathfrak{m})$  be an   $RCD^*(K,N)$--space 
. If the dimension of the isometry group of  $X$ is ${\lfloor N \rfloor} ({\lfloor N \rfloor} +1)/2,$ then $X$ is isometric to one of the following space forms:
\begin{enumerate}
\item An ${\lfloor N \rfloor}$--dimensional Euclidean space $\mathbb{R}^{{\lfloor N \rfloor} }$.
\item An ${\lfloor N \rfloor}$--dimensional sphere $\mathbb{S}^{{\lfloor N \rfloor} }$.
\item An ${\lfloor N \rfloor}$--dimensional projective space $\mathbb{R}P^{{\lfloor N \rfloor} }$.
\item An ${\lfloor N \rfloor}$--dimensional simply connected hyperbolic space $\mathbb{H}^{{\lfloor N \rfloor} }$.
\end{enumerate}
\end{restatable}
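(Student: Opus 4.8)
The plan is to track the equality case in the dimension estimate of Theorem~\ref{thm:dimension}. Write $n = \lfloor N\rfloor$ and $G := \Iso(X)$, a Lie group by Theorem~\ref{thm:lie group}; recall that isometries of an $RCD^*(K,N)$ space preserve $\mathfrak m$ up to a multiplicative constant and hence act on tangent cones as metric-measure isometries. Fix a point $p$ whose tangent cone is Euclidean, $T_pX \cong \mathbb R^{k}$, which is the generic situation by Gigli--Mondino--Rajala \cite{GigliMondRaj} and Mondino--Naber \cite{MonNab}; here $k\le N$, and since $k$ is an integer, $k \le n$. The isotropy group $G_p$ maps to $O(k)$ through its action on $T_pX$, with discrete kernel, the bound of Theorem~\ref{thm:dimension} coming from $\dim G = \dim(G\cdot p) + \dim G_p \le k + \tfrac{k(k-1)}{2} = \tfrac{k(k+1)}{2} \le \tfrac{n(n+1)}{2}$. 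The hypothesis that this value is attained forces equality at every step: $k = n$; the orbit $G\cdot p$ is open, so that (as $X$ is connected) $G$ acts transitively and $X$ is homogeneous; and the identity component of $G_p$ is all of $SO(n)$, acting on $T_pX = \mathbb R^n$ as the rotation group.

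The second step is to upgrade this homogeneous metric-measure structure to a smooth Riemannian one. Since $X$ is homogeneous and one point --- hence, by transitivity, every point --- has Euclidean tangent cone $\mathbb R^n$, the space has no singular points and its essential dimension equals its topological dimension $n$. Invoking the structure theory of homogeneous $RCD$ spaces, I would then conclude that $X$ is a smooth $n$--dimensional Riemannian manifold on which $G$ acts by smooth isometries, the abstract isotropy action of $G_p$ on $T_pX$ being identified with the usual linear isotropy representation. I expect this to be the main obstacle: the orbit--isotropy count is soft, whereas ruling out singularities and producing a genuine smooth tangent space on which $SO(n)$ acts linearly is where the $RCD$ hypotheses, and not merely the Lie group action, are doing the essential work.

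With smoothness established the argument becomes classical. The identity component $SO(n)$ of $G_p$ acts transitively on the unit sphere of $T_pX$, so $X$ is isotropic and therefore two-point homogeneous; equivalently, carrying isotropy of the maximal dimension $\tfrac{n(n-1)}{2}$ forces $X$ to have constant sectional curvature. By the classification of two-point homogeneous spaces, the remaining projective models $\mathbb{CP}^m$, $\mathbb{HP}^m$, $\mathbb{OP}^2$ have strictly smaller isotropy (namely $U(m)$, $\mathrm{Sp}(m)\cdot\mathrm{Sp}(1)$, and $\mathrm{Spin}(9)$) and are excluded by the equality $\dim G_p = \tfrac{n(n-1)}{2}$. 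Finally, among complete constant-curvature $n$--manifolds only those realizing the maximal isometry dimension $\tfrac{n(n+1)}{2}$ qualify: the simply connected models $\mathbb R^{n}$, $\mathbb S^{n}$ and $\mathbb H^{n}$, together with $\mathbb{RP}^{n} = \mathbb S^{n}/\{\pm\Id\}$, whose isometry group $O(n+1)/\{\pm \Id\}$ retains the full dimension because $\{\pm\Id\}$ is central; every other space form has a strictly smaller isometry group. This yields exactly the four spaces in the statement.
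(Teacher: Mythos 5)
Your skeleton (equality forces maximal isotropy and transitivity; then upgrade to a smooth Riemannian structure; then the classical classification) parallels the paper's, but three of your steps are assertions rather than proofs, and they are exactly where the work lies. First, the claim that $G_p$ acts on the tangent cone $T_pX\cong\mathbb{R}^k$ giving a homomorphism to $O(k)$ with \emph{discrete kernel} is not established: in the metric-measure setting an isometry fixing $p$ does induce a map of blow-ups (where the tangent is unique), but nothing yet rules out a positive-dimensional subgroup of $G_p$ acting trivially on the cone; excluding that is a genuine ``no small isometries fixing $p$ to first order'' statement. The paper never uses a tangent-cone representation for this bound: Proposition \ref{prop:isotropy upper bound} bounds $\dim G_{p_0}$ by iterating a level-set (coarea-type) estimate for the Lipschitz maps $q\mapsto d(p_\ell,q)$, and kills the residual connected subgroup using essential non-branching of geodesics. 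Second, ``the orbit $G\cdot p$ is open, so $G$ acts transitively'' does not follow from dimension equality: invariance of domain is unavailable because $X$ is not yet known to be a manifold, and a subset of full topological dimension in a metric space need not be open. The paper's Lemma \ref{lem:transitive} instead proves transitivity by contradiction, using the quotient $X/G$, geodesic lifts (Corollary \ref{cor:geodesic lift}), and again the Lipschitz level-set argument: a second orbit would force orbits of Hausdorff dimension at most $n-1<\lfloor N\rfloor$, contradicting that orbits have topological dimension $\lfloor N\rfloor$ (Lemma \ref{lem:maxdimension and dimension orbits}). You could try to patch your step with Montgomery--Zippin-type theorems on full-dimensional orbits in locally compact, locally connected, finite-dimensional spaces, but that must be invoked and verified, not assumed.

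Third, and most seriously, the step you yourself flag --- ``invoking the structure theory of homogeneous $RCD$ spaces'' to conclude that $X$ is a smooth Riemannian manifold --- is left as a black box, and no such off-the-shelf theory exists at the generality needed; this is precisely the content of the paper's proof. The paper assembles it from three ingredients: (i) Berestovskii's theorem, which presents the homogeneous, locally compact, locally contractible geodesic space $X$ as $G/G_{p_0}$ with a Carnot--Carath\'eodory--Finsler metric attached to a $k$-dimensional distribution $\Delta$; (ii) a comparison of tangent cones --- the measured Gromov--Hausdorff tangent $(\mathbb{R}^{m},d_E,\mathcal{L}^m,0)$ coming from the $RCD^*$ structure versus the Carnot-group tangent of the CC metric, whose Hausdorff dimension is at least $\lfloor N\rfloor$ with equality only when $\Delta$ has full rank (Montgomery) --- which forces the CC metric to be Finsler; and (iii) infinitesimal Hilbertianity (quadratic Cheeger energy) to promote Finsler to Riemannian via Ambrosio--Gigli--Savar\'e. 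Without an argument of this kind your proof does not close. Your final step (maximal isotropy forces constant curvature, then the classification of space forms with isometry group of dimension $\lfloor N\rfloor(\lfloor N\rfloor+1)/2$) is correct and is the same appeal to Kobayashi's classification that the paper makes.
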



For Alexandrov spaces, Theorem \ref{thm:lie group} was proven in \cite{FukYam}, and  Theorems \ref{thm:dimension} and \ref{thm:rigidity} were proven in \cite{GalGui}. 

The paper is organized as follows: Section 2 gives preliminaries and general facts needed  in the rest of the paper; Section 3 gives some background on the isometry group of a general metric space, while in Section 4, we prove that the isometry group of an $RCD^*(K,N)$-space is a Lie group;  in Section 5, we bound the maximal dimension of the isometry group of an $RCD^*(K,N)$, and finally, Section 6 deals with the rigidity of a space when the dimension of its isometry group is maximal. 

After a first version of this work was completed and posted to the ArXiv, Gerardo Sosa (MPI, Leipzig) posted a preprint with a result similar to our Theorem \ref{thm:lie group}. His approach is different to ours, and applies to a broader class of metric spaces, although he does not studies Theorems \ref{thm:dimension} and \ref{thm:rigidity}. 

\section{Preliminaries.}
We present here some of the background necessary for the rest of the paper. 

\subsection{Dimension theory} A standard reference for much of this section is \cite{HuWall}. To simplify, we assume that $X$ is a metric space. 

Let $\mathcal{O}$ an open covering of  $X$; its order is defined as the smallest integer $n$ such that every point in $X$ is contained in at most $n$ members of the cover. 

\begin{defi}
The \emph{topological dimension} of $X$, $\dim_T X$ is defined as the  minimum value $n$ such that any open covering of $X$ has an open refinement with order $n+1$ or less.
\end{defi}

The topological dimension of $n$-dimensional manifolds agrees with $n$; thus, for a Lie group with a closed subgroup $H$, we have that 
\[
\dim_T G=\dim_T G/H + \dim_T H.
\]

A subspace $Y$ of $X$ satisfies $\dim_TY\leq \dim_TX$; thus, if $f:Z\to X$ is a topological embedding, we have  $\dim_TZ\leq \dim_TX$. 
Therefore, if $Z$ is compact, and $f:Z\to X$ is a continuous bijection onto its image, we get that $\dim_T Z\leq \dim_T X$. 

Since $X$ is a metric space, there is also the \emph{Hausdorff dimension} $\dim_{\mathcal{H}} X$; the relation between them is given by the inequality
$
\dim_TX\leq \dim_{\mathcal{H}} X
$
(see \cite[Page 107]{HuWall}).

\subsection{Metric measure spaces}
We will only consider metric measure spaces $(X,d,\nu)$ where $\nu$ is a Radon measure over Borel sets.
Two metric measure spaces $(X,d_{X},\mu)$ and $(Y,d_{Y},\nu)$ are \emph{isomorphic} if there exists an isometry $T: (\supp(\mu),d_{X}) \rightarrow (\supp (\nu), d_{Y})$ such that $T_{\#}\mu = \nu$. Observe that every metric measure space $(X,d_{X}, \mu)$ is isomorphic to $(\supp(\mu), d_{X},\mu)$, so from this point on we will assume that $\supp(\mu)=X$. We will denote by $\mathbb{P}_2(X)$ the space of probability measures on $X$ with finite second moments, that is 
\[
\int_X d^2(x_0,x)d\mu (x) <\infty,    
\] for some (and hence for all) $x_0 \in X. $

Let $\mu_0, \mu_1 \in \mathbb{P}_2(X)$. A measure $\pi \in \mathbb{P}(X\times X)$ is a \emph{coupling} between $\mu_0$ and $\mu_1$ if for the projections $\proj_i: X\times X \rightarrow X,$ $i=1,2$,  we have $\proj_{1\#}\pi = \mu_0$, $\proj_{2\#}\pi=\mu_1$. We define the  \emph{$L^2$-Wasserstein distance} between $\mu_0$ and $\mu_1$ as:
\begin{equation*}
\mathbb{W}_2^2(\mu_0,\mu_1) := 
\inf_\pi \left\lbrace\,\int_{X\times X}d(x,y)^2d\pi(x,y)\, \bigg| \, \pi \text{ is a coupling of } \mu_0,\mu_1 \,\right\rbrace.
\end{equation*} A measure $\pi$ that satisfies the infimum will be called an \emph{optimal coupling} between $\mu_0$ and $\mu_1$.


A curve $\gamma: [0,l] \rightarrow X$ is a \emph{geodesic} if its length coincides with the distance between its endpoints. We will call a metric space $(X,d)$ \emph{a geodesic space} if for any two points in $X$ there exists a geodesic that connects them. 
 
 If $(X,d)$ is a Polish geodesic space, then the $L^2$-Wasserstein space $(\mathbb{P}_2(X), \mathbb{W}_2)$ is also Polish and geodesic  (see for example \cite{SturmI}, \cite{Vill}). Let $\Geo(X)$ denote the set of constant speed geodesics  from $[0,1]$ to $(X,d)$ equipped with the $\sup$ norm.  

 We can lift any geodesic $(\mu_t)_{t\in [0,1]} \in \Geo(\mathbb{P}_2(X))$ to a  measure $\nu \in \mathbb{P}(\Geo(X)),$ in such a way that $(e_t)_{\#}\nu= \mu_t$, where $e_t: \Geo(X)\rightarrow X$ is the evaluation map at $t\in [0,1]$. We will then denote by $\OptGeo(\mu_0,\mu_1)$ the space of probability measures $\nu \in \mathbb{P}(\Geo(X))$ such that $(e_0,e_1)_{\#}\nu$ is an optimal coupling between $\mu_0$ and $\mu_1$.

 We will say that an optimal coupling $\nu \in \mathbb{P}(\Geo(X))$ is \emph{essentially non-branching} if it is concentrated on a set of non-branching geodesics.

\subsection{Pointed Gromov-Hausdorff convergence}
\begin{defi}
Let $(X,d_X, p_X),$ $(Y,d_Y, p_Y)$ be Polish  metric spaces. For $\epsilon >0 $ we will call a function  $f_\epsilon : B_{p_X}(1/\epsilon, X)\rightarrow B_{p_Y}(1/\epsilon, Y) $ a pointed $\epsilon$-Gromov-Hausdorff approximation if:
\begin{enumerate}
\item $f_\epsilon(p_X)= p_Y,$

\item for all $u,v \in B_{p_X}(1/\epsilon, X)$, $|\,d_X(u,v)-d_Y(f_\epsilon(u),f_\epsilon(v))\,|< \epsilon,$ and

\item for all $y \in B_{p_Y}(1/\epsilon, Y)$, there exists $x \in B_{p_X}(1/\epsilon, X)$ such that $d_Y(f_\epsilon(x),y) < \epsilon$.
\end{enumerate}
 Note that we do not assume that $f_\epsilon$ is continuous.
\end{defi}

The pointed Gromov Hausdorff distance between two pointed metric spaces $(X,d_X, p_X),$ $(Y,d_Y,p_Y),$ denoted by $d_{GH}((X,d_X, p_X),(Y,d_Y,p_Y))$, is by definition equal to the infimum of all $\epsilon$ such that there exist $\epsilon$-Gromov-Hausdorff approximations from  $(X,d_X, p_X)$ to  $(Y,d_Y,p_Y)$, and from  $(Y,d_Y,p_Y)$ to $(X,d_X, p_X)$.

\subsection{Equivariant Gromov-Hausdorff convergence}

Next, we introduce the notion of equivariant Gromov-Hausdorff convergence  as appears in \cite{FukYam2}. For $r >0 $ and $\Gamma$, a group acting by isometries on a pointed metric space $(X, d_X, p),$ we define $\Gamma (r) := \lbrace \gamma \in \Gamma \,|\, \gamma p \in B_p (r) \rbrace$.

\begin{defi}
Let $(X,d_X, p, \Gamma)$ and  $ (Y,d_Y,q,\Lambda)$ be two metric spaces and $\Gamma, \Lambda$ groups acting isometrically on $X$ and $Y$ respectively. An \emph{equivariant pointed  $\epsilon$--Gromov-Hausdorff approximation} is a triple $(f,\varphi,\psi)$ of maps 
\[
f: B_p(1/\epsilon, X)\rightarrow B_q(1/\epsilon,Y),
\quad 
 \varphi : \Gamma (1/\epsilon)\rightarrow \Lambda (1/\epsilon),
\quad 
\psi: \Lambda(1/\epsilon)\rightarrow \Gamma (1/\epsilon)
\]  
such that
\begin{enumerate}
\item $f(p)=q;$
\item the $\epsilon$--neighbourhood of $f\left(B_p(1/\epsilon,X)\right)$ contains $B_q(1/\epsilon,Y);$
\item if $x,y \in B_p(1/\epsilon,X),$ then 
\[
\left|\,d_X(x,y)-d_Y(f(x),f(y))\,\right|< \epsilon;
\]
\item if $\gamma \in \Gamma (1/\epsilon)$ and both $x, \gamma x \in B_p(1/\epsilon,X), $ then $$d_Y(f(\gamma x), \varphi (\gamma)f(x)) < \epsilon; $$
\item if $\lambda \in \Lambda (1/\epsilon)$ and both $x, \psi (\lambda)x \in B_p(1/\epsilon,X),$ then  $$d_Y(f(\psi (\lambda)x), \lambda f(x)) < \epsilon. $$
\end{enumerate} As usual, we do not assume that $f$ is continuous or that $\varphi, \psi$ are group homomorphisms.

\end{defi}

\begin{defi}
A sequence $(X_i,d_{X_i}, p, \Gamma_i)$ converges to  $(Y,d_Y,q,\Lambda)$ if there are equivariant pointed  $\epsilon_i$-Gromov-Hausdorff approximations with $\epsilon_i\to 0$.
\end{defi}

The next result links  the usual Gromov-Hausdorff convergence to its equivariant version. 
\begin{thm}[Proposition 3.6, \cite{FukYam2}]
\label{thm:fukaya-yamaguchi}
Let $(X_i,d_{X_i}, p, \Gamma_i)$ such that $(X_i,d_{X_i}, p)$ converges to $(Y,d_Y,q)$ in the pointed Gromov-Hausdorff distance. Then there exist some group of isometries of $Y$, $\Lambda$, such that by passing to a subsequence if necessary, we have that
\[
(X_{i_k},d_{X_{i_k}}, p, \Gamma_{i_k}) \to (Y,d_Y,q,\Lambda).
\]
\end{thm}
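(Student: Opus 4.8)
The plan is to reproduce the Arzel\`a--Ascoli argument of Fukaya--Yamaguchi, the two structural facts I will lean on being that the limit space $Y$ is proper (closed balls are compact, as holds for every space arising in the $RCD^*(K,N)$ setting) and that the isometries of a proper metric space form a locally compact, second countable topological group under uniform convergence on compact sets. First I would fix, for each $i$, a pointed $\epsilon_i$--Gromov--Hausdorff approximation $f_i\colon B_p(1/\epsilon_i,X_i)\to B_q(1/\epsilon_i,Y)$ together with a coarse inverse $h_i\colon B_q(1/\epsilon_i,Y)\to B_p(1/\epsilon_i,X_i)$ with $d_{X_i}(h_i\circ f_i,\Id)<\epsilon_i$ and $d_Y(f_i\circ h_i,\Id)<\epsilon_i$ on the relevant balls; such $h_i$ exist by surjectivity condition (3) in the definition. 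Given $R>0$ and $\gamma\in\Gamma_i(R)$, I would transport $\gamma$ to $\gamma_i^Y:=f_i\circ\gamma\circ h_i$, which for large $i$ is defined on any fixed ball $B_q(R',Y)$ since $\gamma$ moves $p$ by at most $R$. Because $\gamma$ is a genuine isometry and the distortions of $f_i,h_i$ vanish, each $\gamma_i^Y$ is a $\delta_i$--almost isometry of $B_q(R',Y)$ with $\delta_i\to0$, and $\gamma_i^Y(q)$ stays in a bounded region of $Y$.

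Next I would extract limits. Since $Y$ is proper, any sequence of almost isometries with error tending to $0$ that moves $q$ a uniformly bounded distance subconverges, uniformly on compact sets, to a genuine isometry of $Y$: equicontinuity and the precompactness of the relevant balls give a pointwise limit, the vanishing error makes that limit distance preserving, and the $\epsilon_i$--surjectivity together with properness makes it onto. I would then define $\Lambda\subset\Iso(Y)$ to consist of all isometries obtainable as subsequential limits $\lambda=\lim_k\gamma_{i_k}^Y$ with $\sup_k d_{X_{i_k}}(\gamma_{i_k}p,p)<\infty$. To make one subsequence work at every radius, I would diagonalize over an exhaustion: for fixed $R$ the limit points of the transported families $\{\gamma_i^Y:\gamma\in\Gamma_i(R)\}$ lie in a fixed compact region of $\Iso(Y)$, so after passing to a subsequence these limit points form a closed set $\Lambda_R\subset\Iso(Y)$ (Chabauty--type compactness of closed subsets of a locally compact second countable group); taking $R=1,2,3,\dots$ and diagonalizing yields a single subsequence and $\Lambda=\bigcup_R\Lambda_R$. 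That $\Lambda$ is a subgroup follows because $d_{X_i}(h_i\circ f_i,\Id)<\epsilon_i$ gives $(\gamma\gamma')_i^Y\approx\gamma_i^Y\circ\gamma_i'^Y$ and $(\gamma^{-1})_i^Y\approx(\gamma_i^Y)^{-1}$ on compact sets, so composition and inversion of group elements pass, up to vanishing error, to composition and inversion of their limits; closedness of $\Lambda$ is built into the construction.

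Finally I would define the approximation maps and verify the five axioms of an equivariant $\epsilon_i$--Gromov--Hausdorff approximation, using $f_i$ itself as the space-level map. For $\gamma\in\Gamma_i(1/\epsilon_i)$ let $\varphi_i(\gamma)\in\Lambda(1/\epsilon_i)$ be an isometry whose transported representative is within $\epsilon_i$ of $\gamma_i^Y$ on $B_q(1/\epsilon_i,Y)$, which exists precisely because $\Lambda$ records all limit points of the transported families; for $\lambda\in\Lambda(1/\epsilon_i)$ let $\psi_i(\lambda)\in\Gamma_i(1/\epsilon_i)$ be a group element whose transport approximates $\lambda$ to within $\epsilon_i$. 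Conditions (1)--(3) then hold by the choice of $f_i$, while (4) and (5) reduce, after replacing $\lambda$ and $\varphi_i(\gamma)$ by their transported representatives and using $d_{X_i}(h_i\circ f_i,\Id)<\epsilon_i$, to the chain of almost-equalities $f_i(\gamma x)\approx\gamma_i^Y(f_i(x))\approx\varphi_i(\gamma)_i^Y(f_i(x))$, all errors being a fixed multiple of $\epsilon_i$ that is absorbed after relabelling the sequence.

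The main obstacle is the diagonal extraction in the second step. Because the groups $\Gamma_i$ may be large and non-discrete, one cannot extract a convergent subsequence one group element at a time, and must control all of $\Gamma_i(R)$ simultaneously; this is exactly where properness of $Y$ is indispensable, as it guarantees that $\Iso(Y)$ with the compact-open topology is locally compact and second countable, so that the compactness of closed subsets applies and the transported families subconverge to a closed set. The technical heart is then to confirm that this limiting closed set is genuinely a subgroup of $\Iso(Y)$ and that it captures exactly the asymptotic behaviour of the $\Gamma_i$, so that both $\varphi_i$ and $\psi_i$ can be defined consistently.
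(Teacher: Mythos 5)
The paper does not prove this statement at all---it quotes it as Proposition 3.6 of Fukaya--Yamaguchi \cite{FukYam2}---and your argument is precisely the proof from that source: transporting group elements through the approximations and their coarse inverses $h_i$, extracting limit isometries by Arzel\`a--Ascoli on the proper space $Y$, assembling $\Lambda$ by a diagonal/Chabauty-type extraction over an exhaustion by radii, and verifying the five axioms of the equivariant approximation with errors that vanish after relabelling. Your outline is correct, including the correct identification (implicit in the paper, where every space involved is proper) that properness of $Y$ is what makes $\Iso(Y)$ locally compact and second countable and allows the uniform, Hausdorff-convergence control over all of $\Gamma_i(R)$ at once rather than element by element.
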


%

\subsection{Measured Gromov-Hausdorff convergence}

\begin{defi}
If  $(X_k,d_k, \mathfrak{m}_k, p_k)$, ${k \in \mathbb{N}}$, and $(X_\infty,d_\infty,\mathfrak{m}_\infty,p_\infty)$ are pointed metric measure spaces, we will say that $X_k$ converges to $X_\infty$ in the pointed measured Gromov-Hausdorff topology if there exist Borel measurable $\epsilon_k$--Gromov-Hausdorff approximations such that $\epsilon_k\rightarrow 0$ and $$(f_{\epsilon_k})_{\#}\mathfrak{m}_k\rightharpoonup \mathfrak{m}_\infty$$ weakly.

\end{defi}

Let $(X,d_X, \mathfrak{m})$ be a metric measure space, $x_0 \in X$ and $r>0;$ we consider the rescaled and normalized pointed metric measure space $(X, d_{rX}, \mathfrak{m}_r, x_0)$ where $d_{rX}(x,y) := rd(x,y)$ for all $x,y \in X$ and $\mathfrak{m}_r$ is given by
\begin{equation}\label{eq:normalized measure}
\mathfrak{m}_r:= \left(\int_{\overline{B}_{x_0}(1/r,X) )}1-d_{rX}(\cdot, x_0)\,d\mathfrak{m} \right)^{-1} \,\mathfrak{m}. 
\end{equation}
 
 
\begin{defi}[Tangent cone]
Let $(X,d,\mathfrak{m})$ be a metric measure space and $x_0 \in X$. A pointed metric measure space $(Y,d_Y,\mathfrak{n}, y_0)$ is a called a tangent cone of $X$ at $x_0$ if there exists a sequence $\{r_i\}\subset \mathbb{R},$ $r_i \rightarrow \infty$ so that $(X, d_{r_iX}, \mathfrak{m}_{r_i}, x_0)$ converges to $(Y,d_Y,\mathfrak{n},y_0)$ in the pointed measured Gromov-Hausdorff topology. We denote the collection of all tangent cones of $X$ at $x_0$ by $\Tan(X,x_0)$.
\end{defi} 

It is important to notice that tangent cones, unlike their analogues in Riemannian manifolds or Alexandrov spaces, may depend on the sequence one considers; examples of this behaviour are studied in \cite{CheegCol}.

\subsection{Ricci curvature bounds on metric measure spaces}
\mbox{}

Next we define the distortion coefficients  $\sigma_{K,N}^{(t)}(\theta)$ for $K \in \mathbb{R}$ and $N \in [1, \infty):$ 
\begin{equation*}
\sigma_{K,N}^{(t)}(\theta):=
\begin{cases}
\infty & \text{if }\, K\theta^2\geq N\pi^2, \\
\frac{\sin(t\theta \sqrt{K/N})}{\sin(\theta\sqrt{K/N})} & \text{if }\, 0<K\theta^2<N\pi^2,\\
t & \text{if }\, K\theta^2=0,\\
\frac{\sinh(t\theta\sqrt{-K/N})}{\sinh(\theta\sqrt{-K/N})} & \text{if } K\theta^2<0.
\end{cases}
\end{equation*}

\begin{defi}[$CD^*(K,N)$ curvature condition] 
 Let $K \in \mathbb{R}$ and $N \in [1, \infty)$. 
 A metric measure space $(X,d, \mathfrak{m})$ is said to be a $CD^*(K,N)$ space if for any two measures $\mu_0,\mu_1 \in \mathbb{P}_2(X)$ with bounded support contained in $\supp(\mathfrak{m})$ and with $\mu_0,\mu_1 \ll \mathfrak{m},$ there exists an optimal coupling $\pi\in \OptGeo(\mu_{0},\mu_{1})$ such that for every $t \in [0,1]$ and $N' \geq N$ one has, 
 \[
 -\int \rho_t^{1-\frac{1}{N'}}\,d\mathfrak{m} \leq -\int \sigma_{K,N'}^{(1-t)}(d(\gamma_0,\gamma_1))\rho_0^{-\frac{1}{N'}}+ \sigma_{K,N'}^{(t)}(d(\gamma_0,\gamma_1))\rho_1^{-\frac{1}{N'}}\,d\pi(\gamma), 
\] 
 where $t\in [0,1]$, and $\rho_t$  is the Radon-Nikodym derivative $d(e_{t\#}\pi)/d\mathfrak{m}$.

\end{defi}

Let $\Lip(X)$ denote the set of Lipschitz functions on $X$. For every $f \in \Lip(X),$ the local Lipschitz constant at $x,$ $|Df|(x),$ is defined by 
$$|Df|(x):= \limsup_{y\rightarrow x} \frac{|f(x)-f(y)|}{d(x,y)}, $$when $x$ is not isolated; otherwise $|Df|(x):= \infty$.

For $f \in L^2(X, \mathfrak{m})$ we define the \emph{Cheeger energy} of $f$ as  
\[
 \Ch(f):= \frac{1}{2}\inf\left\lbrace \liminf_{n\rightarrow \infty}\int_X|Df_n|^2d\,\mathfrak{m}\,\bigg|\, f_n \in \Lip(X), f_n \rightarrow f \, \text{ in } \, L^2 \right\rbrace. 
\] 
 Set $D(\Ch):= \lbrace f \in L^2(X,\mathfrak{m}); \Ch(f)< \infty \rbrace$.

We say that $(X,d, \mathfrak{m})$ is \emph{infinitesimally Hilbertian} if the Cheeger energy is a quadratic form. It is equivalent to the Sobolev space $W^{1,2}(X,d, \mathfrak{m}):= \lbrace f \in L^2\cap D(\Ch) \rbrace$ equipped with the norm $\|f\|^2_{1,2}:= \|f\|^2_2+2\Ch(f)$ being a Hilbert space.

\begin{defi}[$RCD^*(K,N)$ curvature condition]
If  a  $CD^*(K,N)$--space is infinitesimally Hilbertian then  it is called an $RCD^*(K,N)$ space. 
\end{defi}

For a fixed $(K,N)$ with $N\geq 1$, the set of normalized $RCD^*(K,N)$--spaces is precompact for the pointed measured Gromov-Hausdorff topology. As a consequence, every possible tangent cone at a point of an $RCD^*(K,N)$--space lies in $RCD^*(0,N)$.

\subsection{Structure of $RCD^{*}(K,N)$--spaces}
\mbox{}

To finish this section we state some structure results about $RCD^*(K,N)$ spaces.

\begin{teo} [Hausdorff dimension \cite{SturmII}]
\label{thm:Sturm_bound_on_dimension}
Let $(X,d,\mathfrak{m})$ be a $CD^*(K,N)$ space, with $K\in \mathbb{R}$ and $N \geq 1$. Then the Hausdorff dimension of $X$ is bounded above by $N$.
\end{teo}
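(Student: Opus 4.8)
The plan is to extract the bound from the generalized Bishop--Gromov volume comparison, which is the geometric core of the curvature--dimension condition, and then to feed the resulting volume lower bound into a standard covering argument from geometric measure theory. Throughout, $B_r(x)$ denotes the open $r$--ball about $x$, and I use that $\mathfrak{m}$ is Radon with $\supp(\mathfrak{m})=X$.

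First I would record the comparison inequality. As noted in the introduction, every $CD^*(K,N)$ space is $CD(\tfrac{N-1}{N}K,N)$, so I may invoke the generalized Bishop--Gromov inequality of \cite{SturmII}: for every $x\in X$ the function
\[
r\longmapsto \frac{\mathfrak{m}(B_r(x))}{v_{K,N}(r)}
\]
is non-increasing on $(0,\infty)$, where $v_{K,N}(r)$ is the volume of the $r$--ball in the $N$--dimensional model space. The only feature of $v_{K,N}$ I will use is its behaviour near the origin: since the model density is asymptotic to $t^{N-1}$, there is a constant $c_N>0$ with $v_{K,N}(r)\geq c_N r^N$ for all sufficiently small $r$. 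Deriving this monotonicity from the definition is the step I expect to be the main obstacle: one transports a normalized restriction of $\mathfrak{m}$ near $x$ onto its restriction to a larger ball and substitutes into the displacement--convexity inequality, the coefficients $\sigma_{K,N}^{(t)}$ reproducing exactly the model weights; because the definition is phrased for absolutely continuous measures, the concentrated endpoint must be approximated and a limit taken while controlling the distortion terms. I would simply cite \cite{SturmII} for the precise statement.

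Next I would turn monotonicity into a uniform local volume lower bound. Fix $x_0\in X$, a radius $R_0>0$, and put $R:=2R_0$. For every $x\in \overline{B}_{R_0}(x_0)$ the triangle inequality gives $B_R(x)\supseteq B_{R_0}(x_0)$, hence $\mathfrak{m}(B_R(x))\geq \mathfrak{m}(B_{R_0}(x_0))=:m_0>0$, where positivity uses $\supp(\mathfrak{m})=X$. Monotonicity between the radii $r\le R$ then yields, for all small $r>0$,
\[
\mathfrak{m}(B_r(x))\;\geq\;\frac{v_{K,N}(r)}{v_{K,N}(R)}\,\mathfrak{m}(B_R(x))\;\geq\;\frac{c_N m_0}{v_{K,N}(R)}\,r^N\;=:\;c\,r^N,
\]
with $c>0$ depending only on $x_0,R_0,K,N$ and uniform over $x\in\overline{B}_{R_0}(x_0)$.

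Finally I would run the mass--distribution argument on $A:=\overline{B}_{R_0}(x_0)$. Given $\delta>0$ small and the cover of $A$ by the balls $\{B_r(x):x\in A\}$ with $r<\delta$, the $5r$--covering lemma extracts a disjoint subfamily $\{B_{r_i}(x_i)\}_i$ with $A\subseteq\bigcup_i B_{5r_i}(x_i)$. Since $\diam B_{5r_i}(x_i)\leq 10r_i<10\delta$,
\[
\mathcal{H}^N_{10\delta}(A)\;\leq\;\sum_i (10 r_i)^N\;=\;\frac{10^N}{c}\sum_i c\,r_i^N\;\leq\;\frac{10^N}{c}\sum_i \mathfrak{m}(B_{r_i}(x_i))\;\leq\;\frac{10^N}{c}\,\mathfrak{m}\big(B_{R_0+\delta}(x_0)\big),
\]
the last step because the $B_{r_i}(x_i)$ are pairwise disjoint and contained in $B_{R_0+\delta}(x_0)$. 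Letting $\delta\to0$ gives $\mathcal{H}^N(A)<\infty$, whence $\dim_{\mathcal{H}} A\leq N$. Writing $X=\bigcup_{k\in\mathbb{N}}\overline{B}_k(x_0)$ and using stability of Hausdorff dimension under countable unions, I conclude $\dim_{\mathcal{H}} X\leq N$.
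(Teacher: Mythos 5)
The paper offers no proof of this statement---it is imported verbatim from Sturm \cite{SturmII}---and your argument is essentially the proof given there: the generalized Bishop--Gromov comparison yields a locally uniform lower bound $\mathfrak{m}(B_r(x))\geq c\,r^N$, and the $5r$--covering (mass-distribution) argument converts this into local finiteness of $\mathcal{H}^N$ on bounded sets, hence $\dim_{\mathcal{H}}X\leq N$. The only detail to patch is the case of positive effective curvature $K'=\tfrac{N-1}{N}K>0$, where the model volume $v_{K',N}(r)$ and the monotonicity statement only make sense for $r\leq \pi\sqrt{(N-1)/K'}$, so your choice $R=2R_0$ must be capped at that value; this is harmless because the generalized Bonnet--Myers theorem bounds $\diam X$ by the same quantity.
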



The next three results were obtained by Gigli-Rajala-Sturm in \cite{GigRajStu} and regard essential non-branching on $RCD^*(K,N)-$spaces.
The first  gives concerns optimal plans in $RCD^*(K,N)$-spaces.
\begin{teo}[\cite{GigRajStu}]
\label{non-branch}
For $K\in \mathbb{R},$ and $N \in [1, \infty)$, let $(X,d, \mathfrak{m})$ be an $RCD^*(K,N)-$space.
Then for every $\mu, \nu \in \mathbb{P}_2 (X)$ with $\mu \ll \mathfrak{m}$ there exists a unique plan 
$\pi \in \OptGeo (\mu, \nu).$ Furthermore, this plan is induced by a map and concentrated on a set of non-branching geodesics, i.e. there is a Borel set $\Gamma \subset C([0,1], X)$ such that $\pi (\Gamma)=1$ and for every $t \in [0,1)$ the map $e_t: \Gamma \rightarrow X$ is injective.
\end{teo}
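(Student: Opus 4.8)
The plan is to reduce everything to an \emph{essential non-branching} statement and then to extract uniqueness and the map structure from it, following the scheme of Rajala--Sturm and its finite-dimensional refinement. The argument rests on playing two consequences of the hypotheses against each other: the quantitative density control coming from the $CD^{*}(K,N)$ inequality, and the rigidity coming from infinitesimal Hilbertianity, the latter being exactly what distinguishes $RCD^{*}$ from the branching Finsler examples $(\mathbb{R}^{n},d_{\|\cdot\|},\mathcal{L}^{n})$ recalled in the introduction.

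First I would establish the regularity of the interpolants. Fix $\mu\ll\mathfrak{m}$ and $\nu\in\mathbb{P}_{2}(X)$, and let $\pi\in\OptGeo(\mu,\nu)$ be any optimal dynamical plan with $\mu_{t}:=(e_{t})_{\#}\pi$. Feeding $\mu_{t}$ into the $CD^{*}(K,N)$ inequality, I would show that $\mu_{t}\ll\mathfrak{m}$ for every $t\in[0,1)$ and that its density $\rho_{t}$ obeys an explicit upper bound in terms of $\rho_{0}$ and the distortion coefficients $\sigma^{(1-t)}_{K,N}$; the estimate degenerates only as $t\to1$, so mass cannot concentrate instantaneously. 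This step uses only the $CD^{*}$ side of the hypothesis and is essentially soft.

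The heart of the matter is essential non-branching. Here infinitesimal Hilbertianity enters: it upgrades the defining $CD^{*}$ condition, in which one merely asks for \emph{one} good optimal coupling, to its \emph{strong} form, in which the convexity inequality holds along \emph{every} $\mathbb{W}_{2}$--geodesic issuing from an absolutely continuous measure (this is where the quadraticity of the Cheeger energy, equivalently the linearity of the heat flow, is indispensable). I would then argue by contradiction: if an optimal plan charged a set of branching geodesics, a measurable selection would produce, at some intermediate time, two competing continuations of a single geodesic; gluing them yields either a $\mathbb{W}_{2}$--geodesic along which the strong convexity fails, or a concentration of the interpolant density contradicting the bound from the first step. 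Either way one reaches a contradiction, so the plan is concentrated on a Borel set $\Gamma$ of non-branching geodesics, and in particular $e_{t}$ is injective on $\Gamma$ for $t<1$.

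Finally, with essential non-branching in hand, uniqueness and the map structure follow from the general theory of optimal transport in essentially non-branching spaces: a cyclically monotone optimal coupling out of an absolutely continuous measure cannot split mass, hence $(e_{0},e_{1})_{\#}\pi$ is concentrated on the graph of a map, and this map, together with non-branching, determines $\pi$ uniquely. I expect the principal obstacle to be the essential non-branching step, and specifically the bookkeeping that isolates \emph{where} the Riemannian hypothesis is used: the $CD^{*}$ density bound alone tolerates branching, and only the passage to the strong condition, powered by infinitesimal Hilbertianity, closes the gap.
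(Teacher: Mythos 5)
A point of record first: the paper contains no proof of this statement --- it is imported verbatim, as background in the Preliminaries, from Gigli--Rajala--Sturm \cite{GigRajStu} (which in turn builds on Rajala--Sturm \cite{RajStu}). So the comparison here is with the proof in the cited reference. Your three-step skeleton --- (i) regularity of the interpolants $(e_t)_{\#}\pi$ from the finite-dimensional $CD^*$ distortion estimates, (ii) essential non-branching from an upgrade of the convexity inequality to a ``strong'' form valid along every geodesic, which is where infinitesimal Hilbertianity (linearity of the heat flow, EVI) enters, and (iii) uniqueness and the graph structure from the general theory of essentially non-branching spaces --- is indeed the architecture of those papers, and you correctly locate where the Riemannian hypothesis is indispensable.

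There is, however, a genuine gap, and it sits exactly in the step you dismiss as ``essentially soft.'' The $CD^*(K,N)$ condition, as defined in this paper, is an inequality for pairs $\mu_0,\mu_1$ \emph{both} absolutely continuous with respect to $\mathfrak{m}$ and with bounded support; in the theorem at hand $\nu\in\mathbb{P}_2(X)$ is arbitrary (the motivating case for \cite{GigRajStu} is $\nu=\delta_x$, needed for exponentiation), so you cannot ``feed $\mu_t$ into the $CD^*(K,N)$ inequality'': the term $\rho_1^{-1/N'}$ on the right-hand side has no meaning when $\nu$ is singular. To produce even one plan with absolutely continuous interpolants one must regularize $\nu$ by absolutely continuous measures, apply $CD^*$ to the approximations, and pass to the limit in $\OptGeo$ via compactness together with stability of the density bounds, using that $\sigma^{(1-t)}_{K,N}$ degenerates only as $t\to 1$. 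Worse, even after this, the $CD^*$ condition only yields \emph{some} good plan, whereas the uniqueness argument in your third step requires that \emph{every} optimal plan have absolutely continuous interpolants (otherwise a hypothetical second, ``bad'' plan escapes the non-branching machinery entirely). Closing this some-versus-every gap is precisely where \cite{GigRajStu} must intertwine the non-branching argument with the interpolant-regularity argument; the two cannot be run in sequence, independently, as your outline proposes. In short: right skeleton, but the steps are not independent in the order you give, and the one you flag as easy is the one carrying the main new content of the cited proof.
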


The second yields absolute continuity of measures appearing in interior points of geodesics in the Wasserstein space.
\begin{cor}[\cite{GigRajStu}]
\label{abscont}
Let $K\in \mathbb{R},$ $N \in [1, \infty)$ and $(X,d, \mathfrak{m})$ be an $RCD^*(K,N)-$space. If
 $\mu, \nu \in \mathbb{P}_2 (X)$ with $\mu \ll \mathfrak{m}$ and  $\pi \in \OptGeo (\mu, \nu)$ is the optimal plan given by Theorem \ref{non-branch}, then $e_{t\#}\pi \ll \mathfrak{m}$ for all $t \in [0,1).$
\end{cor}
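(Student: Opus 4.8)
\emph{Plan.} The idea is to read off an absolute‑continuity bound for $\mu_t:=e_{t\#}\pi$ directly from the integrated $CD^*(K,N)$ inequality, applied not to $\pi$ itself but to suitable restrictions of it. The crucial feature is that the resulting density bound for $\mu_t$ will depend only on an upper bound for the density of the \emph{left} endpoint $\mu$, and not on $\nu$ at all; this uniformity is what will later let us treat a general, possibly singular, $\nu$ by approximation. I fix $t\in[0,1)$, and I first assume that $\mu$ and $\nu$ have bounded support, reducing to this case at the very end by exhausting with the sets $\{\gamma:\gamma_0,\gamma_1\in B_R\}$ and letting $R\to\infty$.

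\emph{Core estimate.} Suppose in addition that $\nu\ll\mathfrak{m}$ and that $\mu$ has bounded density, $\rho_0:=d\mu/d\mathfrak{m}\le M$. Let $B\subset X$ be Borel with $\delta:=\mu_t(B)>0$, and set $\pi_B:=\delta^{-1}\,\pi|_{e_t^{-1}(B)}$. Since a restriction of an optimal plan is optimal, $\pi_B\in\OptGeo(\eta_0,\eta_1)$ with $\eta_s:=e_{s\#}\pi_B\le\delta^{-1}\mu_s$; in particular $\eta_0,\eta_1\ll\mathfrak{m}$, and by the uniqueness in Theorem \ref{non-branch} the plan $\pi_B$ is precisely the one to which the $CD^*(K,N)$ definition applies. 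Writing $h_s:=d\eta_s/d\mathfrak{m}$ (the density of the absolutely continuous part when $s=t$), the defining inequality with $N'=N$, after changing sign and discarding the nonnegative term carrying $h_1(\gamma_1)^{-1/N}$, reads
\[
\int_X h_t^{\,1-\frac1N}\,d\mathfrak{m}\ \ge\ \int \sigma_{K,N}^{(1-t)}\!\big(d(\gamma_0,\gamma_1)\big)\,h_0(\gamma_0)^{-\frac1N}\,d\pi_B(\gamma).
\]
On bounded supports $d(\gamma_0,\gamma_1)\le D<\infty$, so $\sigma_{K,N}^{(1-t)}(\theta)\ge c_t>0$ for $\theta\in[0,D]$ (this coefficient is continuous and strictly positive there, and when $K>0$ the generalized Bonnet–Myers bound keeps $\theta$ below the blow‑up radius); together with $h_0\le M/\delta$ this makes the right‑hand side at least $c_t\,(\delta/M)^{1/N}$. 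For the left‑hand side, Jensen's inequality applied to the concave function $x\mapsto x^{1-1/N}$ and the probability measure $\mathfrak{m}|_B/\mathfrak{m}(B)$ gives $\int_X h_t^{\,1-1/N}\,d\mathfrak{m}\le\mathfrak{m}(B)^{1/N}$, since $\eta_t$ is a probability measure concentrated on $B$. Comparing the two bounds yields $\mu_t(B)=\delta\le c_t^{-N}M\,\mathfrak{m}(B)$; as $B$ was arbitrary, $\mu_t\le c_t^{-N}M\,\mathfrak{m}$.

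\emph{Removing the hypotheses on $\nu$ and on the density of $\mu$.} To drop $\nu\ll\mathfrak{m}$, choose $\nu_n\ll\mathfrak{m}$ with $\nu_n\to\nu$ in $\mathbb{W}_2$ (replace $\nu$ on the cells of a fine Borel partition by the corresponding multiples of $\mathfrak{m}$, using $\supp\mathfrak{m}=X$). The unique plans $\pi_n\in\OptGeo(\mu,\nu_n)$ form a tight family and, by stability of optimality under narrow convergence together with the uniqueness of Theorem \ref{non-branch}, converge narrowly to $\pi$; hence $e_{t\#}\pi_n\rightharpoonup\mu_t$. The core estimate gives $e_{t\#}\pi_n\le c_t^{-N}M\,\mathfrak{m}$ with a constant \emph{independent of} $n$, and lower semicontinuity of mass on open sets under narrow convergence propagates this bound to the limit, so $\mu_t\le c_t^{-N}M\,\mathfrak{m}$ for every $\nu$. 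Finally, to drop the boundedness of $\rho_0$, decompose $\mu$ along the level sets $E_k=\{k-1\le\rho_0<k\}$ and set $\pi^k:=\pi|_{e_0^{-1}(E_k)}$; each normalized $\pi^k$ is optimal with left marginal of density at most $k$, so $e_{t\#}\pi^k\ll\mathfrak{m}$ by the previous step, and $\mu_t=\sum_k e_{t\#}\pi^k$ is absolutely continuous as a countable sum of absolutely continuous measures.

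\emph{Main difficulty.} The crux is the core estimate, and within it the legitimacy of invoking the $CD^*(K,N)$ definition for the restricted plan $\pi_B$: this requires knowing that $\pi_B$ is \emph{the} distinguished optimal plan for its own marginals, which is exactly what the uniqueness in Theorem \ref{non-branch} supplies. The other point that demands care is the uniform positive lower bound $c_t$ for $\sigma_{K,N}^{(1-t)}$ over the relevant range of lengths, together with the reduction to bounded supports. Once the bound on $\mu_t$ is seen to be independent of $\nu$, the passage from absolutely continuous to arbitrary $\nu$ is a soft compactness argument.
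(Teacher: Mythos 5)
The paper offers no proof of Corollary \ref{abscont}: it is imported directly from \cite{GigRajStu}, so the only meaningful comparison is with the cited literature rather than with an internal argument. Your proof is correct, and it follows the same circle of ideas used there (restriction of the plan, the entropy inequality, and a Jensen-type estimate). The point your write-up gets exactly right is the logical role of Theorem \ref{non-branch}: the $CD^*(K,N)$ definition only guarantees the entropy inequality for \emph{some} optimal plan between the marginals, and it is the uniqueness statement --- applicable because the first marginal $\eta_0\le\delta^{-1}\mu$ of the restricted plan $\pi_B$ is absolutely continuous --- that forces this distinguished plan to be $\pi_B$ itself; without that identification the core estimate would not get off the ground. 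The Jensen step $\int_B h_t^{1-1/N}\,d\mathfrak{m}\le\mathfrak{m}(B)^{1/N}$ and the uniform lower bound $c_t>0$ for $\sigma_{K,N}^{(1-t)}$ over bounded lengths are both sound (for $K>0$ the coefficient tends to $+\infty$, not to $0$, at the Bonnet--Myers threshold, so positivity of the infimum holds in all cases), and the three successive removals of hypotheses --- on $\nu\ll\mathfrak{m}$, on $\|\rho_0\|_\infty$, on the supports --- are ordered consistently, each step using only what the previous one established. If you wrote this out in full, three small points would each deserve a line, though none is a gap: tightness of the plans $\pi_n$ uses that finite-dimensional $RCD^*$ spaces are proper, so geodesics with endpoints in a fixed bounded set form a compact subset of $\Geo(X)$; transferring the bound $e_{t\#}\pi_n\le C\mathfrak{m}$ to the narrow limit uses lower semicontinuity on open sets together with outer regularity of the Radon measure $\mathfrak{m}$; and after normalization the left marginal of $\pi^k$ has density bounded by $k/\mu(E_k)$ rather than $k$ (harmless, since only boundedness is used). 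A by-product worth keeping is that your route actually yields the quantitative statement $e_{t\#}\pi\le c_t^{-N}\,\|\rho_0\|_\infty\,\mathfrak{m}$ whenever $\rho_0$ is bounded, with a constant independent of $\nu$; this is stronger than the qualitative absolute continuity asserted in the Corollary and parallels the density bounds appearing in \cite{GigRajStu}.
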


The third assures that any given point can be connected by at most one geodesic to almost every other point.
\begin{cor}[\cite{GigRajStu}]
\label{uniquegeod}
Let $K \in \mathbb{R},$ $N\in [1,\infty)$ and $(X,d,\mathfrak{m})$ an $RCD^*(K,N)$ space. Then for  
every $x \in supp(\mathfrak{m})$ the following holds: for $\mathfrak{m}-$a.e. $y$ there is only one 
geodesic connecting $y$ to $x.$ 
\end{cor}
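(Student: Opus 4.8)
The plan is to argue by contradiction, fixing $x \in \supp(\mathfrak{m})$ and supposing that the set
\[
B := \{\, y \in X \mid \text{there exist at least two distinct geodesics from } y \text{ to } x \,\}
\]
has positive measure. Since $\mathfrak{m}$ is locally finite, after intersecting with a large ball $B_x(R)$ I may assume that $B$ is bounded and satisfies $0 < \mathfrak{m}(B) < \infty$. Normalizing, I set $\mu := \mathfrak{m}(B)^{-1}\,\mathfrak{m}|_B$, which is an element of $\mathbb{P}_2(X)$ with $\mu \ll \mathfrak{m}$, and let $\nu := \delta_x$.

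The heart of the argument is to produce two distinct plans in $\OptGeo(\mu, \nu)$. Because $\nu$ is a Dirac mass, the only coupling of $\mu$ and $\nu$ is the product $\mu \otimes \delta_x$, so a measure on $\Geo(X)$ lies in $\OptGeo(\mu, \delta_x)$ precisely when it is concentrated on minimizing geodesics running from their starting point to $x$. Applying a measurable selection theorem to the closed-valued multifunction $y \mapsto \{\, \text{unit-speed geodesics from } y \text{ to } x \,\}$, I would choose two Borel maps $y \mapsto \gamma^1_y$ and $y \mapsto \gamma^2_y$ with $\gamma^1_y \neq \gamma^2_y$ for every $y \in B$. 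Pushing $\mu$ forward then yields plans $\nu_1 := (\gamma^1)_{\#}\mu$ and $\nu_2 := (\gamma^2)_{\#}\mu$, both lying in $\OptGeo(\mu, \delta_x)$ since in each case $(e_0,e_1)_{\#}\nu_i = \mu \otimes \delta_x$ is the (unique, hence optimal) coupling.

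Now I invoke Theorem \ref{non-branch}: as $\mu \ll \mathfrak{m}$, the set $\OptGeo(\mu, \delta_x)$ contains a \emph{unique} plan $\pi$, concentrated on a Borel set $\Gamma$ of geodesics on which $e_t$ is injective for every $t \in [0,1)$. By uniqueness, $\nu_1 = \nu_2 = \pi$, so both $(\gamma^1)_{\#}\mu$ and $(\gamma^2)_{\#}\mu$ are concentrated on $\Gamma$; hence for $\mu$-a.e.\ $y$ both $\gamma^1_y$ and $\gamma^2_y$ lie in $\Gamma$. Both geodesics start at $y$, that is $e_0(\gamma^1_y) = e_0(\gamma^2_y) = y$, so the injectivity of $e_0$ on $\Gamma$ forces $\gamma^1_y = \gamma^2_y$ for $\mu$-a.e.\ $y$. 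Since $\mu$ is concentrated on $B$, this contradicts the choice $\gamma^1_y \neq \gamma^2_y$ on $B$; therefore $\mathfrak{m}(B) = 0$. As $X$ is geodesic, $\mathfrak{m}$-a.e.\ $y$ is then joined to $x$ by exactly one geodesic, which is the assertion.

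I expect the principal obstacle to be the measurability steps, namely verifying that $B$ is $\mathfrak{m}$-measurable and that two genuinely distinct geodesic-valued selections $\gamma^1,\gamma^2$ can be made in a Borel fashion. This rests on the fact that $\Geo(X)$ is Polish and that the assignment $y \mapsto \{\, \text{geodesics from } y \text{ to } x \,\}$ has closed graph, so that the locus where the fiber has at least two points is measurable and the von Neumann--Aumann / Kuratowski--Ryll-Nardzewski selection results apply to the associated multifunction of distinct pairs. Once this is granted, the optimal-transport portion of the proof follows immediately from the uniqueness and non-branching supplied by Theorem \ref{non-branch}.
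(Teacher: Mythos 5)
The paper offers no proof of this corollary: it is quoted verbatim from \cite{GigRajStu}, so there is no internal argument to compare yours against. Judged on its own merits, your proof is correct, and it is essentially the standard way the result is deduced from Theorem \ref{non-branch}: because the only coupling of $\mu$ with $\delta_x$ is $\mu\otimes\delta_x$, \emph{every} lift of it to $\mathbb{P}(\Geo(X))$ with first marginal $\mu$ lies in $\OptGeo(\mu,\delta_x)$, so the uniqueness of the plan, together with the injectivity of $e_0$ on the carrying set $\Gamma$, forces the selected geodesics to agree $\mu$-a.e., contradicting the choice of two distinct selections on a set of positive measure.

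The one step you should tighten is the selection argument, which you rightly flag as the main obstacle. The multifunction of \emph{distinct} pairs of geodesics is not closed-valued (the diagonal is removed), so Kuratowski--Ryll-Nardzewski does not apply to it as stated. The clean fix: write $B=\bigcup_{n}B_n$, where $B_n$ is the set of $y$ admitting two geodesics from $y$ to $x$ with $\sup$-distance at least $1/n$. Since $RCD^*(K,N)$ spaces with $N<\infty$ are proper, an Arzel\`a--Ascoli argument shows each $B_n$ is closed (hence $B$ is Borel), and on $B_n$ the pair-multifunction
\[
y\;\longmapsto\;\left\{(\gamma,\sigma)\;:\;\gamma,\sigma\in\Geo(X),\ \gamma_0=\sigma_0=y,\ \gamma_1=\sigma_1=x,\ \textstyle\sup_{t}d(\gamma_t,\sigma_t)\geq 1/n\right\}
\]
is nonempty, compact-valued and has closed graph, so a Borel selection exists; if $\mathfrak{m}(B)>0$ then some $B_n$ (intersected with a ball around $x$) has positive finite measure, and your argument runs verbatim there. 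Two cosmetic points: the geodesics must be taken constant-speed on $[0,1]$ (not unit-speed) so that they lie in $\Geo(X)$ as the paper defines it, and the final conclusion uses that $X=\supp(\mathfrak{m})$ is geodesic to guarantee existence of at least one geodesic from a.e.\ $y$ to $x$. With these adjustments the proof is complete.
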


Finally, we will also need the stratification developed by Mondino and Naber.
\begin{teo}[\cite{MonNab}]
\label{thm:stratification}
Let $(X,d,\mathfrak{m})$ be an $RCD^{*}(K,N)$-space with $1\leq N\leq \infty,$ $K \in \mathbb{R}$. Define for $1\leq k\leq {\lfloor N \rfloor} $, 
\[
\mathcal{R}_k := \left\lbrace\, x \in X \,|\, \mathbb{R}^k \in \Tan(X,x) \text{ but } \mathbb{R}^{k+j} \notin \Tan(X,x) \text{ for every } j\geq 1\,\right\rbrace.
\]
 Then

\begin{enumerate}
\item Every $\mathcal{R}_k$ is $\mathfrak{m}-$measurable and $\mathfrak{m}\left(X- \bigcup_{1\leq k \leq {\lfloor N \rfloor} }\mathcal{R}_k \right) =0$.

\item For $\mathfrak{m}-$a.e. $x \in \mathcal{R}_k$ the tangent cone of $X$ at $x$ is unique and isomorphic to the $k-$dimensional Euclidean space.

\end{enumerate} 

\end{teo}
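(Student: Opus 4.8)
The plan is to reduce the theorem to two ingredients already available in the theory: the existence, due to Gigli--Mondino--Rajala \cite{GigliMondRaj}, of a Euclidean tangent at $\mathfrak{m}$-almost every point, and a quantitative (\emph{almost}) version of Gigli's splitting theorem \cite{Gigli}, stable under measured Gromov--Hausdorff convergence. The argument then splits into a soft part, establishing the partition and measurability in item (1), and an analytic part, establishing uniqueness and Euclideanity in item (2).

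For item (1), I would first observe that every element of $\Tan(X,x)$ lies in $RCD^*(0,N)$, so by Theorem \ref{thm:Sturm_bound_on_dimension} its Hausdorff dimension is at most $N$; in particular $\mathbb{R}^m \in \Tan(X,x)$ forces $m \leq \lfloor N \rfloor$. Hence, whenever $x$ admits some Euclidean tangent there is a largest integer $k$ with $\mathbb{R}^k \in \Tan(X,x)$, and then $x \in \mathcal{R}_k$ by definition. Since \cite{GigliMondRaj} guarantees that the set of such $x$ has full measure, this yields $\mathfrak{m}\left(X \setminus \bigcup_{k} \mathcal{R}_k\right) = 0$. For the measurability of each $\mathcal{R}_k$ I would use the precompactness of normalized $RCD^*(K,N)$-spaces together with a countable description of the condition ``$\mathbb{R}^k \in \Tan(X,x)$'': the pointed measured Gromov--Hausdorff distance from a rescaling $(X,d_{rX},\mathfrak{m}_r,x)$ to the model $\mathbb{R}^k$ is jointly Borel in $(x,r)$, so the relevant $\limsup$ and $\liminf$ conditions taken over rational scales cut out Borel sets.

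The core is item (2). Fixing $x \in \mathcal{R}_k$ and a sequence $r_i \to 0$ along which the rescalings converge to $\mathbb{R}^k$, I would build on each such scale a \emph{$\delta$-splitting map} $u = (u_1,\dots,u_k)\colon B_r(x) \to \mathbb{R}^k$ of harmonic functions whose gradients are almost orthonormal and whose Hessians are small in the averaged integral sense, $\frac{1}{\mathfrak{m}(B_r(x))}\int_{B_r(x)} |\mathrm{Hess}\,u_a|^2 \, d\mathfrak{m} \leq \delta/r^2$. The almost-splitting theorem produces these maps from the Gromov--Hausdorff closeness, and conversely a $\delta$-splitting map with $\delta$ small is an $\epsilon(\delta)$-Gromov--Hausdorff approximation onto a ball of $\mathbb{R}^k$, bi-Lipschitz off a set of small measure. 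The decisive step is a maximal-function and telescoping estimate over the dyadic scales lying between consecutive $r_i$: bounding an additive-over-scales quantity built from the Hessian term shows that, away from a set of controlled measure, $u$ remains $\delta$-splitting at \emph{every} intermediate scale. This excludes any non-Euclidean or higher-dimensional tangent, forcing the tangent to be the unique $\mathbb{R}^k$ at $\mathfrak{m}$-a.e.\ point of $\mathcal{R}_k$ and simultaneously yielding the bi-Lipschitz chart.

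The hard part is exactly this last analytic step. Gigli's splitting theorem is a rigidity statement, and turning it into a quantitative estimate stable under convergence requires the full first- and second-order calculus on $RCD^*$-spaces: existence and interior regularity of harmonic functions, the dimensional Bochner inequality, and good cut-off functions with Laplacian bounds. Packaging these into the maximal-function estimate that transfers ``Euclidean along one sequence of scales'' into ``Euclidean at all small scales'' is where the real difficulty lies; by contrast, once Theorem \ref{thm:Sturm_bound_on_dimension} and \cite{GigliMondRaj} are granted, the partition and measurability of item (1) are comparatively routine.
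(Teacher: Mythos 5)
Note first that the paper does not prove this statement: Theorem \ref{thm:stratification} is quoted as background directly from \cite{MonNab}, so your proposal can only be measured against the argument in that source, not against anything internal to this paper. With that said, your item (1) is essentially correct and matches standard practice: the bound $m\leq\lfloor N\rfloor$ for Euclidean tangents via Theorem \ref{thm:Sturm_bound_on_dimension} applied to tangents (which lie in $RCD^*(0,N)$), the full-measure statement from \cite{GigliMondRaj}, and Borel measurability via rational scales are all sound; indeed the paper itself records the description $\mathcal{R}_k=\cap_{\epsilon>0}\cup_{\delta>0}(\mathcal{R}_k)_{\epsilon,\delta}$, which is exactly the countable structure your measurability argument needs.

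For item (2) you name the right toolbox --- harmonic $\delta$-splitting maps, the almost splitting theorem, good cut-offs, the dimensional Bochner inequality, and a maximal-function/telescoping estimate over dyadic scales --- and this is genuinely the machinery of \cite{MonNab} (used there chiefly for the rectifiability and bi-Lipschitz charts). But there is a gap at precisely the step you flag as decisive. A $k$-dimensional $\delta$-splitting map that remains $\delta$-splitting at every small scale does \emph{not} show that balls are close to balls of $\mathbb{R}^k$; it shows they are close to balls of a product $\mathbb{R}^k\times Z$, i.e. every tangent at the point splits off an $\mathbb{R}^k$ factor. Your sketch asserts that the telescoping estimate ``excludes any non-Euclidean or higher-dimensional tangent,'' but nothing you describe kills the cross-section $Z$. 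Ruling out a nontrivial $Z$ requires the \emph{maximality} of $k$ in the definition of $\mathcal{R}_k$, implemented via an iterated-tangent principle: if some tangent at $x$ were $\mathbb{R}^k\times Z$ with $Z$ nontrivial, then $Z$ is itself an $RCD^*(0,N-k)$ space, hence by \cite{GigliMondRaj} has a Euclidean tangent $\mathbb{R}^j$, $j\geq1$, at some point, and since ($\mathfrak{m}$-a.e.) tangents of tangents are again tangents, one would get $\mathbb{R}^{k+j}\in\Tan(X,x)$, contradicting $x\in\mathcal{R}_k$. This ``tangents of tangents'' step and the splitting-off argument at density points of $(\mathcal{R}_k)_{\epsilon,\delta}$ are where \cite{MonNab} does the real work for a.e.\ uniqueness, and they are absent from your outline. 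A smaller omission: the theorem identifies the tangent as a metric \emph{measure} space, so one must also argue that the limit reference measure on $\mathbb{R}^k$ is (a normalization of) Lebesgue measure, which needs a separate, if short, rigidity argument for $RCD^*(0,N)$ weights on $\mathbb{R}^k$.
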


This stratification can be rewritten as  $\mathcal{R}_k = \cap _{\epsilon >0} \cup_{\delta >0} (\mathcal{R}_k)_{\epsilon, \delta}, $ where 

$$(\mathcal{R}_k)_{\epsilon, \delta} := \lbrace\, x \in X\, |\, d_{GH}(B_x (1, r^{-1}X),\, B_0 (1, \mathbb{R}^k)  ) < \epsilon,  0<r<\delta\, \rbrace,$$
which is the form that we will use in the rest of the paper.

\section{The isometry group of a metric space}
We will denote by $\Iso(X)$ the isometry group of a metric space $X$.
The natural topology when working with $\Iso(X)$ is the compact-open topology; \cite{Hatcher} contains a good summary of it. For the particular case of isometry groups, the following result is specially useful:

\begin{thm}[\cite{DanWae}, \cite{Koba}]
\label{thm:general properties iso(X)}
Let $(X,d)$ be a connected, locally compact metric space and $\Iso(X)$ its isometry group. For each $p\in X$, denote by $\Iso(X)_p$ the isotropy group of $\Iso(X)$ at $p$. Then $\Iso(X)$ is locally compact with respect to the compact open topology, and $\Iso(X)_p$ is compact for every $p$. Moreover, if $X$ is compact, $\Iso(X)$ is compact. 
\end{thm}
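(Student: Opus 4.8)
The plan is to prove this classical van~Dantzig--van der Waerden statement through the Arzel\`a--Ascoli theorem, exploiting that isometries form an equicontinuous (indeed distance-preserving) family. The crucial structural fact I would invoke is that, on an equicontinuous family of self-maps of $X$, the compact-open topology coincides with the topology of uniform convergence on compact sets, and that for such families the Arzel\`a--Ascoli criterion reduces relative compactness to \emph{pointwise} relative compactness: a family $\mathcal F\subset C(X,X)$ is relatively compact in the compact-open topology as soon as it is equicontinuous and, for every $x\in X$, the orbit $\mathcal F(x):=\{f(x):f\in\mathcal F\}$ has compact closure. Since every element of $\Iso(X)$ is $1$-Lipschitz, equicontinuity is automatic, so the whole argument is reduced to controlling pointwise orbits.

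First I would treat the isotropy group $\Iso(X)_p$. Here I only need that the orbit $O(x):=\{g(x):g\in\Iso(X)_p\}$ is relatively compact for each $x$, which I would establish by a connectedness argument: set $B:=\{x\in X:\overline{O(x)}\ \text{is compact}\}$ and show $B$ is nonempty, open and closed, hence all of $X$. Nonemptiness is clear since $p\in B$ (its orbit is $\{p\}$). For openness and closedness I would use local compactness in the form that any compact set $K$ admits a radius $r>0$ whose closed $r$-neighbourhood is compact (cover $K$ by finitely many relatively compact balls). If $x\in B$ and $d(x,x')<r$, then for every $g\in\Iso(X)_p$ one has $d(g(x'),g(x))=d(x',x)<r$ with $g(x)\in\overline{O(x)}=:K$, so $O(x')$ lies in the $r$-neighbourhood of $K$ and is relatively compact; this gives openness, and the same estimate applied to points $x'\in B$ near a limit point $x$ gives closedness. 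Arzel\`a--Ascoli then yields relative compactness of $\Iso(X)_p$. To upgrade to compactness I would check closedness of the family: a uniform-on-compacta limit $g$ of isometries fixing $p$ is distance-preserving and fixes $p$, and applying the same relative-compactness argument to the inverses $\{g^{-1}\}$ (which also fix $p$) and passing to a further subsequence produces a distance-preserving limit $h$ with $g\circ h=h\circ g=\Id$, so $g\in\Iso(X)_p$.

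Next I would obtain local compactness of $\Iso(X)$ by exhibiting a compact neighbourhood of the identity. Using local compactness, choose $r>0$ so that $\overline B(p,r)$ is compact, and set $\mathcal V:=\{g\in\Iso(X):d(g(p),p)\le r\}$. Since the evaluation $g\mapsto g(p)$ is continuous for the compact-open topology, $\mathcal V$ is closed and contains the open neighbourhood $\{g:d(g(p),p)<r\}$ of the identity. To see $\mathcal V$ is relatively compact I would rerun the connectedness argument with $B':=\{x\in X:\{g(x):g\in\mathcal V\}\ \text{is relatively compact}\}$, which is nonempty because $\{g(p):g\in\mathcal V\}\subset\overline B(p,r)$, and which is open and closed by the same $1$-Lipschitz plus local-compactness estimate. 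Thus $\mathcal V$ is a compact neighbourhood of the identity, and since left translations are homeomorphisms of $\Iso(X)$ (composition being continuous), $\Iso(X)$ is locally compact. Finally, when $X$ is compact every pointwise orbit is trivially relatively compact, so Arzel\`a--Ascoli gives relative compactness of all of $\Iso(X)$, and closedness follows because a distance-preserving self-map of a compact metric space is automatically surjective, hence an isometry.

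I expect the main obstacle to be the connectedness (open--closed) argument controlling the pointwise orbits: it is precisely here that both hypotheses, local compactness and connectedness, are indispensable, since without them closed bounded subsets of $X$ need not be compact and the Arzel\`a--Ascoli criterion could fail. A secondary technical point, easy to overlook, is verifying that limits of isometries are genuinely \emph{onto} (and hence isometries rather than mere isometric embeddings); I would settle this uniformly by always passing to convergent subsequences of the inverse maps.
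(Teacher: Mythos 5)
The paper itself offers no proof of this statement: it is quoted as a classical result of van Dantzig--van der Waerden directly from \cite{DanWae} and \cite{Koba}. So the comparison is against the classical argument, and your strategy --- Arzel\`a--Ascoli for the equicontinuous family of isometries, plus an open-and-closed (connectedness) argument propagating relative compactness from one orbit to all orbits --- is exactly that argument in outline. The architecture is right, but there is one genuine gap, located precisely at the step you dispatch with ``the same estimate applied to points $x'\in B$ near a limit point $x$ gives closedness.''

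The two halves of the clopen argument are not symmetric. For openness you fix $x\in B$, set $K=\overline{O(x)}$, and choose the radius $r>0$ \emph{for this} $K$ (so that the closed $r$-neighbourhood of $K$ is compact) before $x'$ enters; then every $x'$ with $d(x,x')<r$ lies in $B$. For closedness the roles reverse: given a limit point $x$ of $B$ you must pick $x'\in B$ close to $x$, and your estimate then needs $d(x,x')$ to be smaller than the radius $r'$ attached to $K'=\overline{O(x')}$. But $r'$ is only determined \emph{after} $x'$ is chosen, and nothing in your argument prevents $r'$ from shrinking to $0$ as $x'\to x$; the same defect recurs verbatim in the set $B'$ of your local-compactness step. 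The missing ingredient is that elements of $\Iso(X)$ are \emph{surjective} isometries, hence carry closed balls to closed balls, $g\bigl(\overline{B_{y}(s,X)}\bigr)=\overline{B_{g(y)}(s,X)}$. Consequently the radius of compactness $\rho(y):=\sup\lbrace s>0 : \overline{B_{y}(s,X)}\ \text{is compact}\rbrace$ is both $1$-Lipschitz and invariant under the group, so the orbit closure $K'$ is \emph{uniformly} locally compact: every point of $K'$ has $\rho\geq\rho(x')\geq\rho(x)-d(x,x')$. Once $d(x,x')<\rho(x)/2$, this bounds $r'$ below by a quantity strictly larger than $d(x,x')$, and your estimate then does close the argument (equally, it shows the complement of $B$ is open). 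Note that this also demotes your framing of surjectivity as a ``secondary technical point'' about limits of isometries: surjectivity is what makes the orbit-propagation step itself work, since non-surjective isometric embeddings need not preserve $\rho$, and without its invariance the clopen argument has no lower bound on the radii it needs.
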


We will assume from now on that the space $X$ satisfies the hypothesis of Theorem \ref{thm:general properties iso(X)}.

The action of $\Iso (X)$ on $X$ is effective,  so   $\lbrace \Id \rbrace = \cap_{p \in X}(\Iso (X))_p. $ Thus, we have that  $\lbrace \Id \rbrace$ is closed, and  then that $\Iso (X)$ is Hausdorff.
Standard results as in \cite[Theorem 11.8]{Foll} 
give us that  each isotropy group         
$\Iso (X)_p$ admits a Haar measure.

The isometry group of an arbitrary metric space is a topological group, but there are many examples where it lacks a differentiable structure, as the reader can see in \cite{FukYam}. 
In order for $\Iso(X)$ to be a Lie group, the metric space needs to have additional structure, as happens, for instance in Riemannian manifolds and Alexandrov spaces. 

%
%

The main tool to distinguish  Lie groups from generic topological groups is the following useful fact.
\begin{teo}[\bf Gleason-Yamabe]\label{GleasonYamabe}
Let $G$ be a locally compact topological group. Suppose that $G$ is not a Lie group. Then, for every neighbourhood $U$ of the identity element in $G$, there exists a nontrivial compact subgroup of $G$ contained in $U$.
\end{teo}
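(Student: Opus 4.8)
The plan is to obtain this statement as a corollary of the structure theory of locally compact groups --- that is, of the solution to Hilbert's fifth problem --- rather than to prove it from scratch. The cleanest route passes through the \emph{no small subgroups} (NSS) characterization of Lie groups: a locally compact Hausdorff group is a Lie group if and only if it admits a neighbourhood of the identity containing no nontrivial subgroup. The substantive half of this equivalence --- that a locally compact NSS group carries a Lie group structure --- is exactly the deep content of the Gleason--Montgomery--Zippin--Yamabe theory, and I would invoke it as a black box rather than reprove it.

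Granting this characterization, the argument becomes formal. First I would pass to the contrapositive: since $G$ is \emph{not} a Lie group, it cannot have the NSS property, so \emph{every} neighbourhood of the identity contains a nontrivial subgroup. This already yields arbitrarily small nontrivial subgroups; what remains is to arrange that they be compact and actually contained in the prescribed neighbourhood $U$.

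For this upgrade I would exploit local compactness. Given $U$, choose a neighbourhood $V$ of the identity with $\overline{V}$ compact and $\overline{V}\subset U$; this is possible because $G$ is locally compact and Hausdorff. By the previous step there is a nontrivial subgroup $H\subset V$, and its closure $\overline{H}$ is again a subgroup. It is nontrivial since $H\subset\overline{H}$, it satisfies $\overline{H}\subset\overline{V}\subset U$, and it is a closed subset of the compact set $\overline{V}$, hence compact. Thus $\overline{H}$ is the required nontrivial compact subgroup contained in $U$.

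The entire difficulty is concentrated in the black box: the implication that a locally compact group with no small subgroups must be a Lie group is the hard half of Hilbert's fifth problem, established through Gleason metrics, one-parameter subgroups, and the approximation of $G$ by Lie quotients. Everything surrounding it --- the contrapositive and the passage from small subgroups to small compact subgroups --- is soft point-set topology. Consequently, in the setting of this paper I expect the honest ``proof'' to amount to a citation of the Gleason--Yamabe theorem together with the elementary reduction recorded above.
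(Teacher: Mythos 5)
Your proposal is correct and matches the paper's treatment: the paper states this result without proof, as a classical fact from the Gleason--Montgomery--Zippin--Yamabe solution of Hilbert's fifth problem, exactly the black box you invoke. Your elementary reduction from the NSS characterization --- contrapositive to get a nontrivial subgroup $H\subset V$ with $\overline{V}$ compact, $\overline{V}\subset U$, then replacing $H$ by the closed subgroup $\overline{H}\subset\overline{V}$, which is compact and nontrivial --- is sound, with the only implicit hypothesis being that $G$ is Hausdorff, which holds in the paper's setting since the isometry groups considered are shown to be Hausdorff.
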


In the rest of the paper, and when $G$ is not a Lie group,  we will refer to the sequence of subgroups obtained by the above Theorem when taking a basis of neighbourhoods of $G$ at the identity as \emph{small subgroups}. 
\subsection*{Actions of the isometry group}

Recall that an action $G\times X\to X$ is called \emph{proper} if the map 
\[
G\times X \to X\times X, \qquad (g,x)\to (g\cdot x, x)
\]
is a proper map. 
\begin{thm}
If $X$ is a complete metric space, the action of $\Iso(X)$ on $X$ is proper.
\end{thm}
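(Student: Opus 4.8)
The plan is to prove properness through the sequential characterization of proper actions. Recall the standing assumptions that $X$ is connected and locally compact (the hypotheses of Theorem \ref{thm:general properties iso(X)}); I will use these together with completeness, but --- unlike the Hopf--Rinow-based reasoning of a first attempt --- I will \emph{not} assume that $X$ is a length space or that it is proper. Since a connected, locally compact metric space is $\sigma$-compact and hence separable, the compact-open topology on $\Iso(X)$ coincides with uniform convergence on compact sets and is metrizable; moreover $\Iso(X)$ is locally compact by Theorem \ref{thm:general properties iso(X)}. It therefore suffices to show that whenever $(g_n)\subset\Iso(X)$ and $x_n\to x$ in $X$ with $g_nx_n\to y$, the sequence $(g_n)$ has a subsequence converging in $\Iso(X)$: a routine argument then identifies the preimage under $(g,x)\mapsto(gx,x)$ of a compact $L\subseteq X\times X$ as sequentially compact, the limit lying in the preimage because $d(g_{n_k}x_{n_k},g_\infty x)\le d(x_{n_k},x)+d(g_{n_k}x,g_\infty x)\to0$. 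First I would record that $g_nx\to y$ as well, since $d(g_nx,g_nx_n)=d(x,x_n)\to0$.

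The heart of the argument is to upgrade convergence at the single point $x$ to relative compactness of the entire orbit $\{g_nz\}$ for every $z$. I would set
\[
A:=\{\,z\in X : \{g_nz : n\in\mathbb{N}\}\ \text{has compact closure in}\ X\,\}
\]
and prove $A=X$ by connectedness, verifying that $A$ is nonempty, open and closed. For nonemptiness and a whole neighbourhood of $x$: choose $r>0$ with $\overline{B}_y(r)$ compact (local compactness at $y$); for $z$ with $d(z,x)$ small, $d(g_nz,y)\le d(z,x)+d(g_nx,y)<r$ for large $n$, so $g_nz$ eventually lies in the compact ball $\overline{B}_y(r)$, whence $z\in A$. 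For openness at $z_0\in A$, put $C=\overline{\{g_nz_0\}}$, which is compact; covering $C$ by finitely many balls with compact closure produces $t>0$ for which $\{w:d(w,C)<t\}$ sits inside a finite union of compact balls, and since $d(g_nz,g_nz_0)=d(z,z_0)$, every $z$ with $d(z,z_0)<t$ has $\{g_nz\}$ inside that compact union, so $z\in A$.

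For closedness --- and this is exactly where completeness enters, \emph{replacing} the length-space input of the earlier approach --- suppose $z_j\to z_0$ with $z_j\in A$. Given $\epsilon>0$, pick $j$ with $d(z_0,z_j)<\epsilon/2$; then $d(g_nz_0,g_nz_j)<\epsilon/2$ for all $n$, and a finite $\epsilon/2$-net for the totally bounded set $\{g_nz_j\}$ yields a finite $\epsilon$-net for $\{g_nz_0\}$. Thus $\{g_nz_0\}$ is totally bounded, and since $X$ is complete its closure is compact, giving $z_0\in A$. Connectedness now forces $A=X$, so $\{g_nz\}$ is relatively compact for every $z$.

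Finally, with pointwise relative compactness in hand and $(g_n)$ equicontinuous (isometries are $1$-Lipschitz), Arzel\`a--Ascoli yields a subsequence $g_{n_k}\to g_\infty$ uniformly on compacta, and $g_\infty$ is distance-preserving as a pointwise limit of isometries. To see that $g_\infty$ is onto I would run the identical argument on $(g_n^{-1})$, noting $d(g_n^{-1}y,x)=d(y,g_nx)\to0$, so that $g_n^{-1}y\to x$ and the roles of $x$ and $y$ are symmetric; passing to a common subsequence gives $g_{n_k}^{-1}\to h_\infty$ with $g_\infty\circ h_\infty=h_\infty\circ g_\infty=\Id$, whence $g_\infty\in\Iso(X)$ and $g_{n_k}\to g_\infty$ in $\Iso(X)$. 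The main obstacle is precisely the open/closed dichotomy in the analysis of $A$: one must spread relative compactness of an orbit from a single point to all of $X$ without geodesics or properness of $X$, and the decisive point is that completeness (total boundedness $\Rightarrow$ relative compactness) supplies closedness while local compactness supplies openness, with connectedness finishing the job.
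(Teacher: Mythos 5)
Your proof is correct, and it takes a genuinely different route from the paper's. The paper does not argue the statement itself: it defers to \cite{AleBet}, whose proof is written for Riemannian manifolds and whose key compactness input is that bounded sets are relatively compact (Hopf--Rinow); transplanted to the metric setting as the authors indicate, that argument really needs $X$ to be a \emph{proper} metric space, which in the paper's applications is automatic because a complete, locally compact geodesic space is proper by Hopf--Rinow--Cohn-Vossen (and $RCD^*(K,N)$-spaces are geodesic). You deliberately avoid this: instead of getting relative compactness of the orbit sets $\{g_nz\}$ from boundedness, you propagate it across $X$ by the open/closed dichotomy for your set $A$, where local compactness gives openness, completeness (via total boundedness transfer from $z_j$ to $z_0$, using $d(g_nz_0,g_nz_j)=d(z_0,z_j)$) gives closedness, and connectedness --- which is a standing hypothesis in the paper from Theorem \ref{thm:general properties iso(X)} onward, so you are entitled to it --- finishes. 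The payoff is a statement valid for any complete, connected, locally compact metric space, with no length-space structure and no Heine--Borel property assumed; the cost is the extra bookkeeping you correctly supply ($\sigma$-compactness and hence hemicompactness of $X$, metrizability of the compact-open topology, so that sequential compactness of the preimage of a compact set suffices), plus the Arzel\`a--Ascoli endgame, which is common to both approaches. The remaining details are all sound: the finitely many initial terms in the ``eventually in $\overline{B}_y(r)$'' step are harmlessly absorbed into the compact closure, the half-radius covering argument legitimately produces the uniform $t>0$ in the openness step, and surjectivity of the limit is correctly obtained by running the same argument on $(g_n^{-1})$ along a common subsequence, using $d(g_n^{-1}y,x)=d(y,g_nx)\to 0$.
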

A proof of this can be found, for instance,  in \cite{AleBet}; although their proof is for Riemannian manifolds, all their arguments are metric and can be carried out to the context of metric spaces.

Recall that $\Geo(X)$ denotes the set of constant speed geodesics  from $[0,1]$ to $(X,d)$ equipped with the $\sup$ norm, and  $e_t: \Geo(X)\rightarrow X$ is the evaluation map at $t\in [0,1]$, defined as 
\[
e_t(\gamma)=\gamma_t.
\]
Since isometries carry geodesics to geodesics, there is a natural action 
\[
\Iso(X)\times \Geo(X)\to \Geo(X),
\qquad
(g\cdot\gamma)_t=g\cdot \gamma_t
\]
that is itself isometric. Moreover,
 for every $t \in [0,1]$, geodesic $\gamma$ and $g \in \Iso(X),$  we have that
\[
e_t\circ g (\gamma)= g\circ e_t(\gamma). 
\]

$\Iso(X)$ also acts on $\mathbb{P}_2(X)$ by isometries, and so if $(\mu_t)_{t \in [0,1]}$ is a Wasserstein geodesic, $g \in \Iso(X)$  then  $(g_{\#}\mu_t)_{t \in [0,1]}$ is also a Wasserstein geodesic.

\subsection*{Quotient spaces by the action of the isometry group}
We know  that $G := \Iso(X)$ acts properly on $(X,d,\mathfrak{m}),$ so  \cite[Theorem  4.3.4]{Palais} yields that

$$d^* (G\cdot x, G \cdot y ) := \inf \lbrace\, d(x,gy) \,|\, g \in G\, \rbrace $$
is a metric on $X/G.$

It is a well-known fact that 
the quotient map $\pi: X \rightarrow X/G$ is an open map as well as a submetry (i.e, sendings balls $B_p(R,X)$ to balls $B_{\pi(p)}(R,X/G)$ for small $R$'s depending on $p$, see \cite{BerGui}). 

\begin{prop}\label{prop.loccomp}
$(X/G, d^*)$ is locally compact.
\end{prop}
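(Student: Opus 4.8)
The plan is to deduce local compactness of the quotient $(X/G, d^*)$ from the local compactness of $X$ together with the fact that the quotient map $\pi \colon X \to X/G$ is an open submetry. The essential point is that submetries send balls onto balls, so metric balls in the quotient are images of compact sets.

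First I would fix a point $\pi(p) \in X/G$ and use local compactness of $X$ to choose a radius $R>0$ such that the closed ball $\overline{B}_p(R,X)$ is compact. Since $X$ is a complete, locally compact, geodesic (hence proper) space, one may in fact take $R$ as small as needed; I will assume $R$ is small enough that the submetry property holds at $p$, so that $\pi\bigl(B_p(R,X)\bigr) = B_{\pi(p)}(R, X/G)$. I would then pass to the closed ball: because $\pi$ is $1$-Lipschitz (it is a submetry, hence distance non-increasing), the image $\pi\bigl(\overline{B}_p(R,X)\bigr)$ is contained in $\overline{B}_{\pi(p)}(R, X/G)$, and conversely the submetry property gives that this image contains the open ball $B_{\pi(p)}(R,X/G)$.

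Next I would invoke continuity of $\pi$ and compactness of $\overline{B}_p(R,X)$ to conclude that $\pi\bigl(\overline{B}_p(R,X)\bigr)$ is a compact subset of $X/G$. Consequently the open ball $B_{\pi(p)}(R, X/G)$ is a subset of a compact set, and more precisely its closure (taken in the metric space $X/G$) is contained in the compact set $\pi\bigl(\overline{B}_p(R,X)\bigr)$; since a closed subset of a compact metric space is compact, the closure $\overline{B}_{\pi(p)}(R, X/G)$ is compact. This exhibits a compact neighbourhood of the arbitrary point $\pi(p)$, which is exactly local compactness of $(X/G, d^*)$.

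The main obstacle I anticipate is making the submetry identification $\pi\bigl(B_p(R,X)\bigr) = B_{\pi(p)}(R,X/G)$ rigorous at the chosen scale, since the submetry property only holds for radii $R$ that are small and may depend on the base point $p$; one must check that such an $R$ can be chosen simultaneously small enough for both compactness of $\overline{B}_p(R,X)$ and the ball-to-ball property. Once that compatible radius is secured, the argument is routine: everything reduces to the continuous image of a compact set being compact, together with the observation that $\pi$ is surjective and distance non-increasing, so no delicate estimate beyond the submetry definition is required.
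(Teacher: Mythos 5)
Your proof is correct and follows essentially the same route as the paper's: push forward a compact closed ball under the continuous map $\pi$, observe that its image contains an open neighbourhood of $\pi(p)$, and conclude that the closure of that neighbourhood is compact as a closed subset of the compact image. The only cosmetic difference is that you invoke the ball-to-ball submetry property to produce the open neighbourhood (worrying about the scale at which it holds), whereas the paper simply uses openness of $\pi$ applied to a ball of half the radius, which sidesteps that issue entirely.
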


\begin{proof}
Let $G\cdot p \in X/G.$  For small $\epsilon >0,$  the ball $A:=B_{p}(\epsilon, X)$ is precompact, and $\pi (\overline{A})$ is a compact set.
Since $\pi$ is open, $\pi (B_{p}(\epsilon /2, X))$ is an open neighbourhood of $G\cdot p$. Then the closure of   $\pi (B_{p}(\epsilon /2, X))$ is a subset of the compact set $\pi(\overline{A})$, and therefore it is itself compact. 
\end{proof}

\begin{prop}\label{prop.complete}
$(X/G, d^*)$ is a complete metric space.
\end{prop}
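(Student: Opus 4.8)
The plan is to deduce completeness of $(X/G, d^*)$ from the completeness of $(X,d)$ together with the fact that the quotient map $\pi \colon X \to X/G$ is a submetry. The strategy is the classical one for quotients by proper isometric actions: lift a Cauchy sequence downstairs to a Cauchy sequence upstairs, use completeness of $X$ to extract a limit, and then project it back down.

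\begin{proof}
Let $(G\cdot x_n)_n$ be a Cauchy sequence in $(X/G, d^*)$. Passing to a subsequence, we may assume that $d^*(G\cdot x_n, G\cdot x_{n+1}) < 2^{-n}$ for every $n$. We construct representatives inductively. Fix a representative $y_1 := x_1$ of the first class. Suppose $y_n$ has been chosen with $G\cdot y_n = G\cdot x_n$. Since
\[
d^*(G\cdot y_n, G\cdot x_{n+1}) = \inf\{\, d(y_n, g\, x_{n+1}) \mid g \in G \,\} < 2^{-n},
\]
there exists $g_{n+1} \in G$ such that $d(y_n, g_{n+1}\, x_{n+1}) < 2^{-n}$; set $y_{n+1} := g_{n+1}\, x_{n+1}$, which again represents the class $G\cdot x_{n+1}$. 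By construction $d(y_n, y_{n+1}) < 2^{-n}$, so for $m > n$ we have $d(y_n, y_m) \leq \sum_{k\geq n} 2^{-k} = 2^{-n+1}$, and hence $(y_n)_n$ is a Cauchy sequence in $(X,d)$. Since $X$ is complete, $y_n \to y$ for some $y \in X$.

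It remains to check that $G\cdot y$ is the limit of $(G\cdot x_n)_n$. Because $\pi$ is $1$-Lipschitz (indeed a submetry), $d^*(G\cdot y_n, G\cdot y) \leq d(y_n, y) \to 0$, and since $G\cdot y_n = G\cdot x_n$ this shows $G\cdot x_n \to G\cdot y$ in $d^*$. A Cauchy sequence with a convergent subsequence converges to the same limit, so the original sequence $(G\cdot x_n)_n$ converges in $(X/G, d^*)$. Therefore $(X/G, d^*)$ is complete.
\end{proof}

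The only genuinely delicate point is the inductive choice of representatives: one must verify that the infimum defining $d^*$ can be approximated by honest group elements $g_{n+1}$ (which it can, since it is an infimum, at the cost of an arbitrarily small error that we absorb into the $2^{-n}$ bound), and that the resulting upstairs sequence is Cauchy. Properness of the action is not strictly needed for completeness itself — it was used earlier only to guarantee that $d^*$ is a genuine metric — so the argument reduces to the submetry/Lipschitz property of $\pi$ plus completeness of $X$. I expect no serious obstacle here; the whole content is the standard lifting trick, and the geometric robustness of submetries (established earlier in the excerpt) makes each step routine.
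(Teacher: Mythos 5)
Your proof is correct and follows essentially the same route as the paper's: pass to a subsequence with geometrically decaying gaps, lift it inductively to a Cauchy sequence in $X$ using the infimum definition of $d^*$, take the limit by completeness of $X$, and project back via the $1$-Lipschitz property of $\pi$. The only cosmetic difference is bookkeeping in the choice of subsequence, so no changes are needed.
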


\begin{proof}
\footnote{This proof appears as Theorem 1.100 in some upublished notes by Holopainen.}   
Let $\lbrace G\cdot x_i \rbrace_{i \in \mathbb{N}}$ be a Cauchy sequence in $X/G.$ Consider an increasing sequence of indices, $\lbrace n_j \rbrace_{j \in \mathbb{N}}$ ,   such that:

$$d^* (G\cdot x_i, G \cdot x_{n_j}   ) < \frac{1}{2^j}, \quad  \forall  i \geq n_j.  $$
We will define now a Cauchy sequence $\lbrace y_j \rbrace_{j \in \mathbb{N}}$ in $X.$ For $y_1$ pick any point in $G_{x_1}.$ The rest of points are defined recursively as:

$$y_j \in G\cdot x_{n_j} \cap B_{y_{j-1}}( \frac{1}{2^{j-1}}, X  ). $$ 
It is clear that the resulting sequence is Cauchy in $X$, and by completeness, there exists some $y \in X$ such that $y_j \rightarrow y$. Finally notice that 

$$d^* (G\cdot y_j, G\cdot y) \leq d(y_j, y), $$ so then the original sequence converges to $G\cdot y.$
\end{proof}

\begin{prop}\label{prop.length}
$(X/G, d^*)$ is a length space.
\end{prop}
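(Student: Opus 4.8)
The plan is to show that $(X/G,d^*)$ inherits the length-space property directly from $(X,d)$, using that the quotient map $\pi$ is a submetry and that $X$ itself is a length space (indeed geodesic, being a complete, locally compact $RCD^*(K,N)$-space). The natural route is to verify the standard characterization of length spaces: for any two points $G\cdot x, G\cdot y \in X/G$ and any $\varepsilon>0$, there exists a midpoint $G\cdot m$ satisfying
\[
d^*(G\cdot x, G\cdot m) \le \tfrac{1}{2}d^*(G\cdot x, G\cdot y)+\varepsilon,
\qquad
d^*(G\cdot m, G\cdot y) \le \tfrac{1}{2}d^*(G\cdot x, G\cdot y)+\varepsilon.
\]
Since $X/G$ is complete (Proposition \ref{prop.complete}) and locally compact (Proposition \ref{prop.loccomp}), the existence of approximate midpoints upgrades to the length (in fact geodesic) property by the usual Hopf--Rinow--Cohn-Vossen argument.

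First I would fix representatives. Given the orbits $G\cdot x$ and $G\cdot y$, choose $g\in G$ so that $d(x,gy)$ nearly realizes the infimum defining $d^*(G\cdot x, G\cdot y)$, say $d(x,gy)\le d^*(G\cdot x, G\cdot y)+\varepsilon$. Because $X$ is a geodesic space, pick a geodesic $\gamma$ in $X$ from $x$ to $gy$ and let $m=\gamma_{1/2}$ be its midpoint, so that $d(x,m)=d(m,gy)=\tfrac12 d(x,gy)$. The key step is then to push this midpoint down to the quotient and control its distances. Since $\pi$ is distance non-increasing (it is a submetry, hence $d^*(\pi(a),\pi(b))\le d(a,b)$ for all $a,b$), I would estimate
\[
d^*(G\cdot x, G\cdot m) \le d(x,m) = \tfrac12 d(x,gy) \le \tfrac12 d^*(G\cdot x, G\cdot y)+\tfrac{\varepsilon}{2},
\]
and symmetrically, using $d^*(G\cdot m, G\cdot y)=d^*(G\cdot m, G\cdot(gy))\le d(m,gy)=\tfrac12 d(x,gy)$, obtain the matching upper bound. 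This exhibits $G\cdot m$ as an approximate midpoint.

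The one point requiring care—and the main obstacle—is the reverse inequality, ensuring the two estimates are not merely upper bounds but genuinely make $G\cdot m$ a midpoint \emph{up to $\varepsilon$}; equivalently, that the approximate-midpoint construction cannot overshoot. This is automatic here because any midpoint candidate satisfies $d^*(G\cdot x,G\cdot m)+d^*(G\cdot m,G\cdot y)\ge d^*(G\cdot x,G\cdot y)$ by the triangle inequality, so the two upper bounds, summing to $d^*(G\cdot x,G\cdot y)+\varepsilon$, force each term to lie within $\varepsilon$ of $\tfrac12 d^*(G\cdot x,G\cdot y)$. Thus no lower-bound argument on the quotient metric is separately needed; the triangle inequality supplies it.

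Finally I would invoke completeness and local compactness to pass from approximate midpoints to an actual geodesic: iterating the midpoint construction produces a Cauchy net of dyadic approximants whose limit, guaranteed by Proposition \ref{prop.complete}, is a genuine minimizing geodesic between $G\cdot x$ and $G\cdot y$. This establishes that $(X/G,d^*)$ is a length space (and in fact geodesic). I expect the midpoint estimate via the submetry property to be the crux; everything else is the standard Hopf--Rinow packaging, and the only subtlety is being careful that the chosen $g$ realizing the approximate infimum is used consistently for both halves of the midpoint estimate.
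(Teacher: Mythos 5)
Your proof is correct, but it takes a genuinely different route from the paper's. You verify the approximate-midpoint criterion: pick $g$ with $d(x,gy)\le d^*(G\cdot x,G\cdot y)+\varepsilon$, take the midpoint $m$ of a geodesic in $X$ from $x$ to $gy$, and use that $\pi$ is $1$-Lipschitz to see that $G\cdot m$ is an $\varepsilon$-midpoint (your observation that the triangle inequality supplies the lower bounds for free is exactly right); you then lean on completeness of $(X/G,d^*)$ (Proposition \ref{prop.complete}) and the standard theorem that a complete space with approximate midpoints is a length space, plus Hopf--Rinow for geodesics. The paper instead verifies the definition of length space directly: it takes a chain of points $x_1,y_1,\dots,x_k,y_k$ whose consecutive distances nearly realize $d^*(G\cdot x,G\cdot y)$, joins each pair $x_j,y_j$ by an almost-minimizing curve in $X$, projects the concatenation to $X/G$, and uses that the submetry $\pi$ does not increase lengths of curves to bound $\mathcal{L}(\bar\sigma)< d^*(G\cdot x,G\cdot y)+\epsilon$; completeness plays no role there, and geodesics are only obtained afterwards in Theorem \ref{thm:quotient length space} by combining Propositions \ref{prop.loccomp}, \ref{prop.complete} and \ref{prop.length} with \cite[Theorem 2.5.23]{Bur}. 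Your approach is shorter and avoids manipulating curves and their lengths in the quotient, at the price of invoking completeness and the (nontrivial) midpoint characterization; the paper's argument is more self-contained, works when $X$ is merely a length space rather than geodesic, and isolates the length property from the completeness and local compactness used later. One small refinement: where you say the iterated dyadic construction yields "a genuine minimizing geodesic," note that approximate midpoints alone only give, for each $\varepsilon$, a curve of length at most $d^*+C\varepsilon$; actual geodesics require the Hopf--Rinow step you mention, so your packaging is fine but the intermediate claim should be phrased as producing almost-minimizing curves.
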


\begin{proof}
\footnote{The proof is the same as that of Proposition 3.2 in some unpublished notes of U.Lang}
Take $G \cdot x \neq G\cdot y $ so then $d^* (G\cdot x, G\cdot y) >0.$ Recall that the length $\mathcal{L}(\sigma)$ of any curve $\sigma $ in $X/G,$ with endpoints $G\cdot x$ and $G\cdot y $,  is greater or equal than $d^* (G\cdot x, G\cdot y) .$      

 Let $\epsilon >0.$ Consider points $x_1,y_1, \cdots , x_k, y_k  \in X$ such that:
\begin{itemize}
\item  $x_1 \in G\cdot x,  y_k \in G\cdot y.$

\item  $G\cdot y_j = G\cdot x_{j+1} ,$    for $j = 1, \cdots k-1.$ 

\item  $\sum_{j=1}^{k} d(x_j,y_j)  < d^*(G\cdot x, G \cdot y) + \frac{\epsilon}{2}.$
\end{itemize}
Consider now curves $\sigma_j : [0,1] \rightarrow X$ joining $x_j$ to $y_j$ such that  $\mathcal{L }(\sigma_j)< d(x_j,y_j)+\frac{\epsilon}{2k}.$  Define $\bar{\sigma} : [0,k] \rightarrow X/G$ as the concatenation of the curves  $\pi \circ \sigma_1,\pi \circ \sigma_2, \cdots , \pi \circ \sigma_k. $ Since the map $\pi: X \rightarrow X/G$ is a submetry we have that:

$$\mathcal{L}(\bar{\sigma})= \sum_{j=1}^{k} \mathcal{L}(\pi \circ \sigma_j) \leq \sum_{j=1}^{k} \mathcal{L}( \sigma_j) <\sum_{j=1}^{k} d(x_j,y_j)+ \frac{\epsilon}{2}  < d^*(G\cdot x, G \cdot y) + \epsilon.  $$

So we can conclude that 
$$
d^* (G\cdot x, G\cdot y )= \inf \lbrace \mathcal{L}(\sigma)\, |\,  \sigma \text{ has endpoints } G\cdot x, G\cdot y \rbrace.
$$ 
 \end{proof}

Propositions \ref{prop.loccomp}, \ref{prop.complete}, and \ref{prop.length} let us conclude by  \cite[Theorem 2.5.23]{Bur}  that

\begin{teo}
\label{thm:quotient length space}
$(X/G, d^*)$  is a geodesic space.
\end{teo}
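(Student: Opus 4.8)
The plan is to deduce the geodesic property from the three preceding propositions by invoking a Hopf--Rinow--type result. Recall that a \emph{geodesic space} is a length space in which every pair of points is joined by a shortest curve realizing the distance between them; thus, having already identified $(X/G, d^*)$ as a length space in Proposition \ref{prop.length}, the only remaining task is to produce, for each pair of orbits $G\cdot x$ and $G\cdot y$, a minimizing curve between them.

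The key tool is the Hopf--Rinow--Cohn--Vossen theorem, quoted as \cite[Theorem 2.5.23]{Bur}, which asserts that a locally compact, complete length space is geodesic. I would therefore simply assemble the three ingredients that have been established for the quotient: local compactness from Proposition \ref{prop.loccomp}, completeness from Proposition \ref{prop.complete}, and the length-space structure from Proposition \ref{prop.length}. Feeding these into the cited theorem yields the conclusion at once, so at this level the proof is a one-line combination of what has already been proved.

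The substantive content, of course, sits inside the cited theorem, and if I had to argue the existence of shortest paths directly rather than appeal to it, the main obstacle would be the standard compactness step. One takes a minimizing sequence of curves joining $G\cdot x$ to $G\cdot y$, reparametrizes them to constant speed so that they form a uniformly Lipschitz family, and then extracts a uniformly convergent subsequence by Arz\'ela--Ascoli. For this one needs the images of the curves to lie in a fixed compact set, which is exactly where local compactness together with completeness is indispensable, since those two properties are what guarantee that closed bounded subsets of $X/G$ are compact. Lower semicontinuity of length under uniform convergence then shows the limit curve is itself minimizing, and the length-space property ensures its length equals the distance. Thus the genuinely delicate point is the compactness of closed balls in $X/G$, and this is precisely the reason the three propositions were proved in advance: each feeds one hypothesis into the Hopf--Rinow mechanism.
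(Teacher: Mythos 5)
Your proposal is correct and follows exactly the paper's own argument: the paper likewise obtains the conclusion by feeding Propositions \ref{prop.loccomp}, \ref{prop.complete}, and \ref{prop.length} into the Hopf--Rinow--Cohn--Vossen theorem cited as \cite[Theorem 2.5.23]{Bur}. Your additional sketch of the Arz\'ela--Ascoli mechanism inside that theorem is accurate but not needed, since the paper treats the cited result as a black box.
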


\begin{cor}[Existence of geodesic lifts]
\label{cor:geodesic lift}
Let $x, y\in X$, and suppose that 
$d^*(G\cdot x, G\cdot y)=\ell$. Then there are geodesics $\gamma^*:[0,\ell]\to X/G$ with $\gamma^*(0)=G\cdot x$, $\gamma^*(\ell)=G\cdot y$ and  $\gamma:[0,\ell]\to X$ with $\gamma(0)=x$ such that  $\pi\circ\gamma=\gamma^*$. 
\end{cor}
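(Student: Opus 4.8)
The plan is to first produce the downstairs geodesic and then lift it horizontally by a discretization-and-limit argument. Since $(X/G,d^*)$ is a geodesic space by Theorem \ref{thm:quotient length space}, there is a constant speed minimizing geodesic $\gamma^*:[0,\ell]\to X/G$ with $\gamma^*(0)=G\cdot x$ and $\gamma^*(\ell)=G\cdot y$, parametrized so that $d^*(\gamma^*(s),\gamma^*(t))=|s-t|$. What remains is to build $\gamma:[0,\ell]\to X$ with $\gamma(0)=x$ and $\pi\circ\gamma=\gamma^*$, and to check it is a geodesic.

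Before lifting I would record two facts. First, $\pi$ is $1$-Lipschitz, which is immediate from the definition $d^*(G\cdot a,G\cdot b)=\inf_g d(a,gb)$. Second, and crucially, since $X$ is a complete, locally compact length space it is proper (Hopf--Rinow), so closed balls are compact; as the $G$-action is proper its orbits (the fibers of $\pi$) are closed, and therefore the distance from any point $p\in X$ to any fiber $\pi^{-1}(q^*)$ is attained. That is, for every $p$ and every $q^*\in X/G$ there exists $q\in\pi^{-1}(q^*)$ with $d(p,q)=d^*(\pi(p),q^*)$. This distance-realization property is the concrete form of the submetry that I will actually use.

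The key steps are as follows. For each $n$ partition $[0,\ell]$ by $t_k=k\ell/n$ and set $p_0^n=x$; inductively choose $p_{k+1}^n\in\pi^{-1}(\gamma^*(t_{k+1}))$ realizing $d(p_k^n,p_{k+1}^n)=d^*(\gamma^*(t_k),\gamma^*(t_{k+1}))=\ell/n$, using the realization property above. Since $X$ is geodesic, join consecutive vertices by minimizing geodesics and let $c_n:[0,\ell]\to X$ be the constant-speed concatenation, so that $\mathcal{L}(c_n)=\sum_k d(p_k^n,p_{k+1}^n)=\ell$ and $c_n(t_k)=p_k^n$ projects onto $\gamma^*(t_k)$. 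Because $\pi$ is $1$-Lipschitz, each $\pi\circ c_n|_{[t_k,t_{k+1}]}$ is a minimizing geodesic of $X/G$ between $\gamma^*(t_k)$ and $\gamma^*(t_{k+1})$, and a short estimate gives $d^*(\pi\circ c_n(t),\gamma^*(t))\le 2\ell/n$ for all $t$, so that $\pi\circ c_n\to\gamma^*$ uniformly. Each $c_n$ is $1$-Lipschitz and takes values in the compact ball $\overline{B}_x(\ell,X)$, so by Arzelà--Ascoli a subsequence converges uniformly to a curve $\gamma$ with $\gamma(0)=x$. Lower semicontinuity of length together with $d(x,\gamma(\ell))=\lim_n d(x,p_n^n)=\ell$ forces $\mathcal{L}(\gamma)=\ell=d(x,\gamma(\ell))$, so $\gamma$ is a minimizing geodesic; and continuity of $\pi$ yields $\pi\circ\gamma=\lim_n\pi\circ c_n=\gamma^*$.

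The main obstacle is ensuring that the limit lift projects \emph{exactly} onto the chosen $\gamma^*$, rather than onto some other minimizing geodesic between $G\cdot x$ and $G\cdot y$; such geodesics need not be unique in $X/G$, so a bare compactness argument would not pin down the projection. This is precisely why the vertices $p_k^n$ are anchored over $\gamma^*(t_k)$ and why the uniform estimate $d^*(\pi\circ c_n(t),\gamma^*(t))\le 2\ell/n$ is needed: it is that estimate, and not compactness alone, that forces $\pi\circ\gamma=\gamma^*$. A secondary point worth verifying carefully is the attainment of the infimum defining $d^*$, which I reduced above to properness of $X$ together with closedness of the orbits under the proper $G$-action.
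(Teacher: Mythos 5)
Your proof is correct, but it runs in the opposite direction from the paper's. The paper never fixes the downstairs geodesic in advance: it uses the submetry property of $\pi$ (equivalently, the distance-realization fact you derive from properness of $X$ and closedness of orbits) to pick a point $y'$ in the fiber over $G\cdot y$ with $d(x,y')=\ell$, takes a geodesic $\gamma$ in $X$ from $x$ to $y'$, and then simply \emph{defines} $\gamma^*:=\pi\circ\gamma$; since $\pi$ is $1$-Lipschitz, this curve has length at most $\ell$ and joins points of $X/G$ at distance $\ell$, so it is automatically a geodesic, and the corollary follows in a few lines with no limiting argument. Your route --- fix an arbitrary geodesic $\gamma^*$ in $X/G$ via Theorem \ref{thm:quotient length space}, anchor discrete lifts over a partition using distance realization in the fibers, and pass to a limit by Arzel\`a--Ascoli together with lower semicontinuity of length and the uniform estimate $d^*(\pi\circ c_n(t),\gamma^*(t))\le 2\ell/n$ --- is longer, but it proves a strictly stronger statement: \emph{every} prescribed geodesic from $G\cdot x$ to $G\cdot y$ admits a lift starting at $x$, whereas the paper's argument only produces whichever $\gamma^*$ arises as a projection, with no control over which one. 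Both arguments rest on the same two pillars (properness of $X$, hence attained distances to closed orbits, and the $1$-Lipschitz/submetry property of $\pi$); the paper's packaging is minimal for the statement as written, while yours buys the genuine lifting property, which is what one would need if a specific $\gamma^*$ were handed to you in advance.
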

\begin{proof}
Recall that $\pi:X\to X/G$ is a submetry, so $\pi(B_p(\ell,X))=B_{\pi(p)}(\ell,X/G)$; choose some $y'\in B_p(\ell,X)$ in the fiber of $\pi$ over $G\cdot y$. Since $\pi$ does not increase distances, $d(x,y')=\ell$. Let $\gamma$ be a geodesic in $X$ between $x$ and $y$, and $\gamma^*=\pi\circ \gamma$. It is clear now that $\gamma$ and $\gamma^*$ satisfy the requirements of the Corollary. 
\end{proof}

\section{The isometry group of an $RCD^{*}(K, N)$--space.}
In this section, we show that the isometry groups of 
$RCD^{*}(K, N)$--spaces are Lie groups. As mentioned in the introduction, this result has also been proved by G. Sosa in \cite{Sosa}, but our proof differs from his.

Lemma 4.1 in \cite{Sosa} states the following Lemma for a more general class of metric spaces; our proof here is different from that in \cite{Sosa}.

\begin{lem}
\label{lem.fixedpoints}
Let $X$ be an $RCD^*(K,N)$--space, and 
let $g \in \Iso (X)$ with  $g \neq \Id$. Then $\Fix(g)$, the fixed point set of $g$,    has $\mathfrak{m}-$measure zero.
\end{lem}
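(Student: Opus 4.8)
The plan is to argue by contradiction: I assume $\mathfrak{m}(\Fix(g))>0$ and deduce that $g=\Id$, contradicting $g\neq\Id$. First I would select a good point. Since $RCD^*(K,N)$ spaces are locally doubling, the Lebesgue density theorem applies, so $\mathfrak{m}$-a.e.\ point of $\Fix(g)$ is a density point of $\Fix(g)$; and by the Mondino--Naber stratification (Theorem \ref{thm:stratification}), $\mathfrak{m}$-a.e.\ point of $X$ has a unique Euclidean tangent cone $\mathbb{R}^{k}$. Because $\mathfrak{m}(\Fix(g))>0$, these two full-measure conditions let me fix a point $x_{0}\in\Fix(g)$ that is simultaneously a density point of $\Fix(g)$ and satisfies $\Tan(X,x_{0})=\{\mathbb{R}^{k}\}$ for some $1\le k\le\lfloor N\rfloor$.

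Next I would blow up at $x_0$. Since $g(x_{0})=x_{0}$, the isometry $g$ acts on every rescaling $(X,d_{r_{i}X},\mathfrak{m}_{r_{i}},x_{0})$ fixing the base point. Along a sequence realizing the tangent cone, equivariant Gromov--Hausdorff convergence (Theorem \ref{thm:fukaya-yamaguchi}) produces a limit isometry $dg$ of $\mathbb{R}^{k}$ fixing the origin, so $dg\in O(k)$. As $x_{0}$ is a density point of the closed set $\Fix(g)$, the rescalings of $\Fix(g)$ converge to a closed subset of $\mathbb{R}^{k}$ of full density at $0$, which must be all of $\mathbb{R}^{k}$. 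The rescaled image of any fixed point of $g$ is fixed by $dg$, so $dg$ fixes a dense subset of $\mathbb{R}^{k}$ and hence $dg=\Id$.

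It then remains to upgrade the infinitesimal statement $dg=\Id$ to $g=\Id$ near $x_{0}$. By Corollary \ref{uniquegeod} applied at $x_{0}$, for $\mathfrak{m}$-a.e.\ $y$ there is a unique geodesic $\gamma$ from $x_{0}$ to $y$; then $g\circ\gamma$ is the unique geodesic from $x_{0}$ to $g(y)$, it has the same length $d(x_{0},y)$ and, since $dg=\Id$, the same initial direction as $\gamma$. Using that $X$ is essentially non-branching (Theorem \ref{non-branch}), I would conclude $g\circ\gamma=\gamma$, whence $g(y)=y$. Thus $g$ fixes $\mathfrak{m}$-a.e.\ point of a small ball $B$ about $x_{0}$; as $\Fix(g)$ is closed and $\supp\mathfrak{m}=X$, its complement is an open null set and so empty in $B$, giving $g=\Id$ on $B$. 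Since isometries of a connected space that agree on a nonempty open set agree everywhere, $g=\Id$ on $X$, the desired contradiction.

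The hardest step will be the passage in the third paragraph from $dg=\Id$ to the coincidence $g\circ\gamma=\gamma$: deducing this from ``equal length and equal initial direction'' is an infinitesimal-rigidity statement for isometries, and making it rigorous requires combining the a.e.\ uniqueness of geodesics with essential non-branching rather than naively invoking either one. An alternative that avoids the induced isometry is to observe that $\Fix(g)\subseteq\{x:d(x,p)=d(x,g(p))\}$ for any $p$ with $g(p)\neq p$, and to prove this bisector is $\mathfrak{m}$-null by blowing up $f=d(\cdot,p)-d(\cdot,g(p))$ to a linear function on the tangent cone and using that $\nabla f=0$ holds $\mathfrak{m}$-a.e.\ on a positive-measure level set while $|\nabla d(\cdot,p)|=|\nabla d(\cdot,g(p))|=1$ $\mathfrak{m}$-a.e.; there the crux shifts to the first-order calculus of distance functions.
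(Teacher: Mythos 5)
Your overall strategy (blow up at a density point, show the induced isometry of the tangent cone is trivial, then propagate) does not close, and the gap is exactly where you flagged it, in the third paragraph. What the blow-up argument actually yields is the estimate $\sup_{x\in B_{x_0}(s,X)}d(gx,x)=o(s)$ as $s\to 0$, i.e.\ $d(\gamma_t,g\gamma_t)=o(t)$; there is no metric-space notion of ``initial direction'' beyond this. To conclude $g\gamma=\gamma$ you would need the rigidity statement ``two geodesics issuing from $x_0$ with equal length and separation $o(t)$ coincide''. That is an angle-rigidity property: it is what Toponogov comparison gives in Alexandrov spaces, and it is precisely how the analogous lemma is proved in \cite{FukYam} and \cite{GalGui}; $RCD^*(K,N)$ spaces have no pointwise angle comparison, and none of the results you invoke supplies a substitute. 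In particular, Theorem \ref{non-branch} cannot be ``combined'' with Corollary \ref{uniquegeod} to do this: it says that the geodesics charged by \emph{one fixed} optimal plan $\pi\in\OptGeo(\mu,\nu)$, $\mu\ll\mathfrak{m}$, form a set $\Gamma$ on which $e_0$ is injective. It says nothing about two arbitrary geodesics $\gamma$ and $g\gamma$ emanating from $x_0$ unless you first exhibit both in the support of a single such plan, which your argument never does; and non-branching in the usual sense is vacuous here, since $\gamma$ and $g\gamma$ share only the point $x_0$, not a segment. A secondary flaw: your closing principle ``isometries of a connected space that agree on a nonempty open set agree everywhere'' is false for general connected geodesic spaces (a tripod with two legs swapped is a counterexample); in this setting it must itself be proved via unique geodesics plus non-branching, as the paper does inside the proof of Proposition \ref{prop:isotropy upper bound}.

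The paper's proof shows how essential non-branching can legitimately be brought to bear, and it avoids blow-ups entirely. One builds two $g$-invariant probability measures: $\mu_0$, the normalized restriction of $\mathfrak{m}$ to $F=\Fix(g)\cap\overline{B_{x_0}(R,X)}$ (invariant because $g$ is the identity on $F$, so no measure-preservation of $\mathfrak{m}$ under $g$ is needed), and $\mu_1$, the Haar average of $h_{\#}\delta_x$ over the isotropy group $G_{x_0}$, where $x\notin\Fix(g)$. Since $\mu_0\ll\mathfrak{m}$, Theorem \ref{non-branch} gives a \emph{unique} plan $\Pi\in\OptGeo(\mu_0,\mu_1)$, concentrated on a set $\Gamma$ of geodesics with $e_0$ injective; uniqueness forces $g_{\#}\Pi=\Pi$, hence $g\Gamma=\Gamma$ as supports. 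Because $\mu_1$ charges $X\setminus\Fix(g)$, some $\gamma\in\Gamma$ satisfies $g\gamma\neq\gamma$ while $\gamma_0=g\gamma_0\in F$, and both $\gamma,g\gamma$ lie in $\Gamma$, contradicting injectivity of $e_0$. In other words, the uniqueness (hence $g$-invariance) of the optimal plan is what substitutes for the missing angle rigidity, and the injectivity of $e_0$ on the support of that single plan is the only form of non-branching ever used. Your alternative sketch via the bisector $\{x:d(x,p)=d(x,g(p))\}$ is a reasonable independent direction, closer in spirit to Sosa's proof \cite{Sosa}, but as written it also defers its crux (that $\nabla d(\cdot,p)=\nabla d(\cdot,q)$ a.e.\ on a positive-measure set forces $p=q$), so it does not repair the gap either.
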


\begin{proof}
The proof proceeds by contradiction.  Suppose that $\mathfrak{m}(\Fix(g))>0$ where $g \neq \Id$. 
Choose some point $\mathfrak{m}$--density point $x_0 \in \Fix(g)$ and some $R>0$ such that 
\[
0< \mathfrak{m} (\Fix(g)\cap \overline{B_{x_0} (R,X)} ) < \infty.
\]

 Define 
 \[
 F:=\Fix(g)\cap \overline{B_{x_0} (R,X)}
 \] 
 and the measure  
 \[
 \mu_0 := \frac{\mathfrak{m}\llcorner F}{\mathfrak{m}(F)} \ll \mathfrak{m}.
 \]
 
We will denote the isotropy group of a point $x$ by $G_x$. 
Since  $X-\Fix(g)$ is a non-empty open set, there exists a ball  $B_{x}(r', X) \subset X-Fix(g).$ Consider  $\mathbb{H}$ the Haar measure of $G_{x_0}$ and define the measure $\mu_1$ on $X$ as
\[
\mu_1  := \int_{G_{x_0}} h_{\#}\delta_x \, d\mathbb{H}(h). \]

Since $g\in G_{x_0}$, $g_{\#}\mu_1= \mu_1,$ and  $\mu_1 (X) =\mu_1(G_{x_0}\cdot x)=1. $ We also  have that, since  $d(x_0, G_{x_0}\cdot x)= d(x_0,x)$,  $\mu_1$ actually belongs to the set $\mathbb{P}_2(X)$.  

Observe that $G_{x_0}(x) \cong  G_{x_0}/G_{x_0}\cap G_x$,  so for the quotient map $p: G_{x_0} \rightarrow G_{x_0}/G_{x_0}\cap G_x $ and for $\epsilon >0,$  it follows that $p^{-1}(B_{x}(\epsilon, X)\cap G_{x_0}(x)) \subset G_{x_0} $ is open. Then

$$\mu_1 (B_{x}(\epsilon, X)) = \int_{G_{x_0} } h_{\#}\delta_x (B_{x}(\epsilon, X))\, d\mathbb{H}(h) = \mathbb{H}(p^{-1}(B_{x}(\epsilon, X)\cap G_{x_0}(x)     )  ) >0.  $$

So we have that $\mu_1 (X-Fix(g) ) >0$. Therefore Theorem \ref{non-branch} yields a unique optimal plan $\Pi \in \OptGeo (\mu_0,\mu_1)$ concentrated on a non-branching set of geodesics $\Gamma.$ Furthermore, the evaluation maps $e_t : \Gamma \rightarrow X$ are injective for all $t \in [0,1).$

The two measures $\mu_0$ and $\mu_1$ are left fixed by $g$, and by uniqueness of the plan $\Pi$, we must have that  $\Pi = g_{\#}\Pi$. But because $g_{\#}\Pi$ is concentrated on $g\Gamma,$  it must  follow that $\Gamma$ agrees (as a set) with $g\Gamma.$ 

Finally, since  $\mu_1 (X-Fix(g) ) >0 $,  there exist  geodesics $\gamma$, $g\gamma \in \Gamma$ such that  $\gamma_1 \neq g\gamma_1 $ (i.e. $\gamma  \neq g\gamma$)  and $\gamma_0 = g\gamma_0.$ This contradicts the fact that  $e_0 :\gamma \rightarrow X$ is injective.

\end{proof}

Lemma \ref{lem.fixedpoints} implies that if $\Gamma$ is a compact non-trivial subgroup of $\Iso(X)$, then $X-\Fix(\Gamma)$ is a set of full measure. Therefore, if $\Gamma_n$ is a countable collection of subgroups of $\Iso(X)$, there is a full measure set of points in $X$ that are not fixed by any of the $\Gamma_n$'s.

\begin{defi}
Given a compact subgroup $\Gamma \leq \Iso(X)$, $R > 0,$ and a point $p\in X$, we define the \emph{displacement function}
$$
\Disp (\Gamma, R, p) := \max\lbrace  \,d(gq,q)\, |\, g \in \Gamma, \, q \in \overline{B_{p}(R/2,X)}\, \rbrace.
$$ 
For $R=0$ we define the displacement in a compatible way as 
\[
\Disp (\Gamma,0, p) := \max \lbrace \,d(gp,p)\,|\, g \in \Gamma\, \rbrace. 
\]
\end{defi}

\begin{lem}
\label{lem:displacement}
Let $\lbrace \Gamma_n \rbrace_{n \in \mathbb{N}} \subset \Iso(X)$ be  a sequence of compact non-trivial small subgroups, and let 
$p\not\in \Fix(\Gamma_n)$ for all $n$.
Then for every $\delta>0$, there exists a positive integer $n$, and some $0<\lambda<\delta$ such that
\[
\Disp(\Gamma_n,\lambda,p)=\dfrac{\lambda}{20}.
\] 
\end{lem}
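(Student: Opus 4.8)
The plan is to view the desired identity as an intermediate-value statement for the map $\lambda\mapsto\Disp(\Gamma_n,\lambda,p)$. Set $h_n(q):=\max_{g\in\Gamma_n}d(gq,q)$, so that $f_n(\lambda):=\Disp(\Gamma_n,\lambda,p)=\max_{q\in\overline{B_p(\lambda/2,X)}}h_n(q)$, a nondecreasing function of $\lambda$. Since it is enough to find $\lambda\in(0,\delta)$, and proving the claim for a smaller radius proves it for $\delta$, I may assume $\delta$ is small enough that $\overline{B_p(\delta/2,X)}$ is compact (local compactness of $X$). The goal then reduces to showing that, for $n$ large, the continuous curve $\lambda\mapsto f_n(\lambda)$ meets the line $\lambda\mapsto\lambda/20$ at some interior point of $(0,\delta)$.

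I first record the two endpoint comparisons. At $\lambda=0$ one has $f_n(0)=\max_{g\in\Gamma_n}d(gp,p)>0$, the strict inequality being exactly the assumption $p\notin\Fix(\Gamma_n)$; since $0/20=0$, the curve starts strictly above the line. For the other endpoint I use that the $\Gamma_n$ are small subgroups, hence converge to $\{\Id\}$: each $\Gamma_n$ lies in a member $U_n$ of a shrinking neighbourhood basis of the identity in the compact-open topology. Consequently, for the fixed compact set $\overline{B_p(\delta/2,X)}$ and any $\epsilon>0$, eventually $\Gamma_n\subset\{g:\sup_{q\in\overline{B_p(\delta/2,X)}}d(gq,q)<\epsilon\}$, so that $f_n(\delta)\to 0$. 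Fixing $n$ large enough gives $f_n(\delta)<\delta/20$, so the curve ends strictly below the line.

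The substantive step is the continuity of $f_n$ in $\lambda$. First, $h_n$ is continuous on $X$: the action $\Iso(X)\times X\to X$ and the distance are continuous, hence $(g,q)\mapsto d(gq,q)$ is continuous, and its maximum over the compact group $\Gamma_n$ depends continuously on $q$. It then remains to prove that $M(r):=\max_{\overline{B_p(r,X)}}h_n$ is continuous. Right-continuity is a compactness argument: for $r_k\downarrow r$, maximizers $q_k\in\overline{B_p(r_k)}$ subconverge to some $q_*$ with $d(q_*,p)\le r$, whence $\lim_k M(r_k)=h_n(q_*)\le M(r)\le\lim_k M(r_k)$. Left-continuity is where the geodesic structure of $X$ is essential: given $r_k\uparrow r$ and a maximizer $q_*$ for $M(r)$ with $d(q_*,p)=r$, I join $p$ to $q_*$ by a geodesic and let $q_*^{(k)}$ be its point at distance $r_k$ from $p$; these lie in $\overline{B_p(r_k)}$ and converge to $q_*$, so $M(r_k)\ge h_n(q_*^{(k)})\to h_n(q_*)=M(r)$. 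Hence $f_n(\lambda)=M(\lambda/2)$ is continuous on $[0,\delta]$.

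Finally I apply the intermediate value theorem to $\phi_n(\lambda):=f_n(\lambda)-\lambda/20$, which is continuous on $[0,\delta]$ with $\phi_n(0)=f_n(0)>0$ and $\phi_n(\delta)<0$ for the chosen large $n$; this produces $\lambda\in(0,\delta)$ with $\Disp(\Gamma_n,\lambda,p)=\lambda/20$. I expect the main obstacle to be the left-continuity of $M$, the only place that genuinely uses that $X$ is geodesic (in a bare length space one would instead approximate $q_*$ by endpoints of almost-geodesics and pass to a limit); everything else is compactness bookkeeping together with the collapse of the small subgroups to the identity.
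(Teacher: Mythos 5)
Your proof is correct and follows essentially the same route as the paper's: monotonicity of the displacement, smallness of the $\Gamma_n$ to force $\Disp(\Gamma_n,\delta,p)$ below $\delta/20$, strict positivity at $\lambda=0$ coming from $p\notin\Fix(\Gamma_n)$, and the intermediate value theorem. The only difference is that you actually prove the continuity of $\lambda\mapsto\Disp(\Gamma_n,\lambda,p)$ (right-continuity by compactness, left-continuity via geodesics), a step the paper merely asserts, so this is a welcome filling-in of detail rather than a divergence in method.
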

\begin{proof}
From the definition of small groups and of the compact-open topology, it is clear that for every $p \in X$,
\begin{equation}\label{limit.smallgroups}
\lim_{n \rightarrow \infty}  \Disp(\Gamma_n, \delta,p) =0.
\end{equation} 
Thus there exists $n \in \mathbb{N}$ such that 
$$0 < \Disp (\Gamma_n,0,p) \leq \Disp (\Gamma_n,\delta,p) \leq \frac{\delta}{20}.$$ 

On the other hand, for any $\theta$ with  $0 < \theta <  \Disp (\Gamma_n,0,p)$, we have
\[
0 < \theta < \Disp (\Gamma_n,0,p) \leq \Disp (\Gamma_n,\theta,p),
\] 
thus 
$$
\frac{\theta}{20}\leq \Disp (\Gamma_n,\theta,p).
$$

%
%
%
%
%
Remember that  for fixed $n\in \mathbb{N}$ and $p \in X,$ the map $s \mapsto \Disp (\Gamma_n,s,p)$
is continuous.  By the intermediate value theorem there exists $\theta < \lambda < \delta$ such that
$ \Disp (\Gamma_n,\lambda,p) = \frac{\lambda}{20}.$ 
\end{proof}

We prove now the first main result of this paper.

\liegroup*


\begin{proof}
We will proceed by contradiction.
Assume that $\Iso (X)$ is not a Lie group, and  let $\lbrace \Gamma_n \rbrace_{n \in \mathbb{N}} \subset \Iso(X)$ denote the sequence of compact non-trivial small subgroups provided by  Theorem \ref{GleasonYamabe}.
 
From Lemma \ref{lem.fixedpoints}, we have that  $\mathfrak{m}(\Fix(\Gamma_n))=0$ for every $n \in \mathbb{N}$, and then the set 
\[
\cap_{n \in \mathbb{N}} (X-\Fix(\Gamma_n))
\] 
is of full measure. 
From the definition of the strata 
$\mathcal{R}$, for any $p \in \cap _{n \in \mathbb{N}}( X-\Fix(\Gamma_n)) \cap \mathcal{R} $ and any $\epsilon >0,$  there exists some $\delta >0$  and $1 \leq k \leq \lfloor N \rfloor$ such that $p \in (\mathcal{R}_k)_{\epsilon, \delta}.$ 
Choose some such $p$ in what follows.

Now consider a sequence $\epsilon_n \rightarrow 0$. By Lemma \ref{lem:displacement}, we can find  sequences of subgroups $\lbrace \Gamma_n \rbrace$ and of positive numbers $\lbrace \lambda_n \rbrace$ approaching zero such that 

$$\Disp(\Gamma_n,\lambda_n,p) =\frac{\lambda_n}{20}, \quad  d_{GH}(B_p (1, \lambda_n^{-1}X),B_0 (1,\mathbb{R}^{k}))< \epsilon_n. $$ 

By  Theorem \ref{thm:fukaya-yamaguchi}, there exists a closed subgroup $\Gamma$ in the isometry group of $\mathbb{R}^k$ such that $(B_p (1, \lambda_n^{-1}X), \Gamma_n , p)$ Gromov-Hausdorff converges equivariantly  to $(B_0 (1,\mathbb{R}^{k}),\Gamma, 0).$  Consider the functions:

\begin{itemize}
\item $f_n : B_p (1, \lambda_n^{-1}X)  \rightarrow B_{0}(1,\mathbb{R}^k),$

\item $\varphi_n : \Gamma_n (\frac{1}{\epsilon_n}) \rightarrow \Gamma(\frac{1}{\epsilon_n}),$

\item $\psi_n: \Gamma(\frac{1}{\epsilon_n}) \rightarrow \Gamma_n (\frac{1}{\epsilon_n}),$
 \end{itemize}
appearing in the definition of equivariant Gromov-Hausdorff convergence.

For each $n\in \mathbb{N}$ there exist $x_n \in B_p (1/2, \lambda_n^{-1}X)$ and $g_n \in \Gamma_n$ such that 
\[
d(g_n x_n,x_n) = \Disp (\Gamma_n, \lambda_n, p) = \frac{\lambda_n}{20}.
\]
Then
\begin{align*}
\frac{1}{20}  = \lambda_n^{-1}d(g_n x_n,x_n) &< \epsilon_n + d_E (f_n(x_n), f_n (g_n x_n ) ) \\
																		 &< 2\epsilon_n +d_E (f_n(x_n), \varphi_n(g_n)f_n(x_n)),
\end{align*}
so the group $\Gamma$ is not trivial. 

Take $y \in B_{0}(1/2, \mathbb{R}^k ) $ and $h \in \Gamma$. For $n$ large enough $h \in \Gamma(\frac{1}{\epsilon_n}).$ By Remark 27.18 of Villani \cite{Vill}  there also exists $x \in B_p (1/2,\lambda_n^{-1}X ) $ such that $d_E ( f_n(x), y)  < 3\epsilon_n.$ So we have
\begin{align*}
d_E (y,hy) &\leq  d_E (y, f_n(x) )+ d_E (f_n (x), hy) \\
				&\leq 3\epsilon_n + d_E (h^{-1} f_n (x ), f_n(\psi_n(h^{-1})x ) )+d_E (f_n(\psi_n(h^{-1})x ) ,y) \\
				&\leq 8\epsilon_n + \lambda_n^{-1}d(\psi_n (h^{-1})x,x) \\
				&\leq 8\epsilon_n + \frac{1}{20}.
\end{align*}
Since $\epsilon_n\to 0$, it follows that $\Disp(\Gamma,1,0 ) \leq \frac{1}{20}$.
This contradicts well-known properties of the isometry group of the Euclidean space, that rule out the existence of nontrivial compact subgroups in $\Iso(\mathbb{R}^k)$ with such a displacement function.
\end{proof}

\section{Upper bounds on the dimension of the isometry group}

In this section, we will find upper bounds for the dimension of the isometry group of an $RCD^*(K,N)$-space. 
To facilitate the reading, we start by bounding the dimension of the isotropy group of a point; recall that this is defined as the subset of elements of $\Iso(X)$ fixing that point. 

\begin{prop}\label{prop:isotropy upper bound}
Let $(X,d,\mathfrak{m})$ be an $RCD^*(K,N)$-space with $N \geq 1$, $K \in \mathbb{R}$. For any $p_0 \in X$, the topological dimension of the isotropy group $G_{p_0}$ is bounded above by $\frac{\lfloor N \rfloor (\lfloor N \rfloor -1)}{2}$. 
\end{prop}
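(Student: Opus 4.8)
The plan is to realise $G_{p_0}:=\Iso(X)_{p_0}$ as a compact Lie group acting effectively by isometries on a Euclidean tangent cone at $p_0$, and then read off the bound from $\dim O(k)=\binom{k}{2}$ with $k\le\lfloor N\rfloor$. By Theorem \ref{thm:lie group} the group $\Iso(X)$ is a Lie group, and by Theorem \ref{thm:general properties iso(X)} the isotropy $G_{p_0}$ is compact; hence $G_{p_0}$ is a compact Lie group and its topological dimension equals its manifold dimension. Since an injective continuous homomorphism of Lie groups is an immersion, it suffices to construct such a homomorphism $\rho\colon G_{p_0}\hookrightarrow O(k)$ for some $k\le\lfloor N\rfloor$; this forces $\dim_T G_{p_0}\le\binom{k}{2}\le\binom{\lfloor N\rfloor}{2}$.

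First I would produce the homomorphism by a blow-up. Each element of $G_{p_0}$ fixes $p_0$, so it is an isometry of every rescaling $(X,d_{rX},\mathfrak{m}_r,p_0)$ fixing the basepoint. Choosing a blow-up sequence $r_i\to\infty$ whose rescalings converge in the pointed measured Gromov--Hausdorff topology to a tangent cone $(Y,d_Y,\mathfrak{n},0)\in\Tan(X,p_0)$, which is an $RCD^*(0,N)$--space, I would invoke Theorem \ref{thm:fukaya-yamaguchi} to pass (along a subsequence) to an equivariant limit. Because here the acting group is a \emph{fixed} compact Lie group $G_{p_0}$ rather than a varying sequence, the limiting approximations $\varphi_n,\psi_n$ stabilise and yield a genuine continuous homomorphism $\rho\colon G_{p_0}\to\Iso(Y)_0$. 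When $p_0$ is a regular point in the sense of Theorem \ref{thm:stratification}, $Y\cong\mathbb{R}^k$ with $k\le\lfloor N\rfloor$ and $\Iso(Y)_0=O(k)$, giving the target group directly; for a general $p_0$ one must instead bound $\dim\Iso(Y)_0$ for the (possibly singular) cone $Y$, either by reducing to a cross-section via the cone structure and inducting on $\lfloor N\rfloor$, or by transferring to a regular direction.

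The main obstacle, and the heart of the argument, is \emph{effectiveness}: showing that $\rho$ is injective, i.e.\ that no nontrivial $g\in G_{p_0}$ can act trivially on the tangent cone. The plan for this is to argue that if $g\ne\Id$ lies in $\ker\rho$, then $\lambda^{-1}d(gx,x)\to 0$ as $x\to p_0$, so $g$ preserves the direction at $p_0$ of each geodesic emanating from $p_0$. Combining this with essential non-branching of $RCD^*(K,N)$--spaces and the $\mathfrak{m}$-a.e.\ uniqueness of geodesics to $p_0$ from Corollary \ref{uniquegeod}, a direction-preserving $g$ must map each such geodesic to itself and therefore fix it pointwise, so $g$ fixes an $\mathfrak{m}$-positive set of points. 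This contradicts Lemma \ref{lem.fixedpoints}, whence $\ker\rho$ is trivial. Making the implication ``trivial on the tangent cone $\Rightarrow$ fixes geodesic directions $\Rightarrow$ fixes a positive-measure set'' fully rigorous in the synthetic setting — where geodesics may branch and tangent cones need not be unique — is the delicate step, and is where the measure-theoretic regularity results of the preliminaries (essential non-branching, Corollary \ref{abscont}, Corollary \ref{uniquegeod}) would do the real work. Once injectivity is secured, the dimension count in the previous paragraph closes the proof, and the Hausdorff-dimension bound $\dim_{\mathcal H}X\le N$ of Theorem \ref{thm:Sturm_bound_on_dimension} guarantees $k\le\lfloor N\rfloor$.
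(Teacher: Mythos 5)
Your strategy---realising $G_{p_0}$ as a subgroup of $O(k)$ acting on a Euclidean tangent cone---is the classical route for Riemannian manifolds and for Alexandrov spaces (it is close in spirit to \cite{GalGui}), but it is \emph{not} the paper's argument, and in the $RCD^*$ setting it has two genuine gaps. The most serious one is injectivity of $\rho$. Your chain ``$g$ trivial on the tangent cone $\Rightarrow$ $g$ preserves the direction of each geodesic from $p_0$ $\Rightarrow$ $g$ maps each such geodesic to itself'' uses, in its last step, a pointwise zero-angle rigidity: two geodesics issuing from $p_0$ whose mutual distance is $o(t)$ must coincide. In Alexandrov spaces this follows from Toponogov comparison, which is exactly what makes the space-of-directions argument of \cite{GalGui} work; in an $RCD^*(K,N)$ space no such pointwise statement is available. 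Theorem \ref{non-branch} only forbids geodesics in the selected plan that agree on a parameter interval from separating, and Corollary \ref{uniquegeod} only gives, for $\mathfrak{m}$-a.e.\ $y$, uniqueness of geodesics joining $y$ to $p_0$; neither excludes two distinct geodesics $\gamma$ (from $y$) and $g\gamma$ (from $gy\neq y$) that meet only at their common endpoint $p_0$ at ``zero angle''. Note also that triviality on the tangent cone only gives $r_i\, d\bigl(g\gamma(t/r_i),\gamma(t/r_i)\bigr)\to 0$ along the chosen blow-up subsequence, not for all scales, and that the passage from the equivariant approximations $(\varphi_n,\psi_n)$ of Theorem \ref{thm:fukaya-yamaguchi} to an actual continuous homomorphism $\rho$ (well defined independently of subsequences, compatible with composition) is itself a claim that needs proof, not something that ``stabilises'' automatically. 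So the contradiction with Lemma \ref{lem.fixedpoints} is never actually reached.

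The second gap is that the Proposition is asserted for \emph{every} $p_0$, while your argument only functions at regular points. At a non-regular point of an $RCD^*(K,N)$ space, tangent cones need not be Euclidean, need not be unique, and are not even known to be metric cones, so there is no cross-section to induct on; and bounding the isotropy dimension of $\Iso(Y)$ at the basepoint of a tangent cone $Y$ (an $RCD^*(0,N)$ space with the \emph{same} $N$) is an instance of the very statement you are proving, so that reduction is circular. The paper avoids tangent cones entirely: it takes a ball $B$ with $0<\mathcal{H}^n(B)<\infty$, applies the coarea-type inequality \cite[Proposition 3.1.5]{AmbTil} to iterated distance functions $F_\ell$ to produce points $p_1,\dots,p_{\lfloor n\rfloor-1}$ whose successive orbits under the nested isotropy groups $G_{p_0}\cap\cdots\cap G_{p_{\ell-1}}$ lie in level sets of Hausdorff dimension at most $n-\ell$, sums the identities $\dim_T H=\dim_T H\cdot p_\ell+\dim_T H_{p_\ell}$, and finally shows the identity component of the last intersection fixes the whole ball $B$, which is then contradicted using a single non-branching geodesic leaving $B$---precisely the kind of a.e.\ statement that Theorem \ref{non-branch} and Corollary \ref{uniquegeod} really furnish. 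To salvage your approach you would need to prove, at an arbitrary point of an $RCD^*(K,N)$ space, that the isotropy group acts effectively on some blow-up; that is a nontrivial statement in this generality and cannot be extracted from the preliminaries you cite.
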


\begin{proof}
Denote by $n$ the Hausdorff dimension of $X$; from Theorem \ref{thm:Sturm_bound_on_dimension}, and the relation between the topological and Haudorff dimensions of $X$, we get the inequalities
\[
\dim_{T}X \leq \lfloor n \rfloor\leq n\leq N.
\]

From \cite{GigPas} and \cite{KellMon}, there exist   $p_0 \in X$ and $R >0$, such that the ball $B:= B_{p_0}(R,X)$  has $n$-dimensional Hausdorff measure  $0< \mathcal{H}_{X}^{n}B < \infty $.  We define the function 
\[
F_{1}: B \rightarrow \mathbb{R},  \qquad F_{1}(q):= d(p_0,q).
\]
 This function is $1$-Lipschitz, so \cite[Proposition 3.1.5]{AmbTil} on the Hausdorff dimension of level sets of Lipschitz maps gives a constant $C>0$ such that
$$
\int_{[0,R]} \mathcal{H}_{X}^{n-1}(F^{-1}_{1}(t))\,d\mathcal{H}^{1}(t)\leq  C\, \mathcal{H}_{X}^{n}(B) < \infty.
$$ 
This implies that for $\mathcal{H}^1$-a.e. $t \in \mathbb{R}$,  the measure $\mathcal{H}_{X}^{n-1}(F^{-1}_{1}(t)) < \infty$ and the Hausdorff dimension $\dim_{\mathcal{H}}F^{-1}_{1}(t) \leq n-1$.
We pick $p_1 \in B$ such that  $\dim_{\mathcal{H}}F^{-1}_{1}(d(p_0,p_1)) \leq n-1$; notice that the orbit $G_{p_0}\cdot p_1 \subset  F^{-1}_{1}(d(p_0, p_1))$, thus $\dim_T G_{p_0}\cdot p_1\leq \lfloor n\rfloor-1$.

Recall that for a compact topological group $H$, $H\cdot p\simeq H/H_p$, so using this with $H=G_{p_0}$, we obtain 
\begin{align*}
\dim_{T}G_{p_0} &= \dim_{T}G_{p_0}\cdot p_1 +\dim_{T}G_{p_0}\cap G_{p_1} \\
						&\leq  \lfloor n\rfloor-1+ \dim_{T}G_{p_0}\cap G_{p_1}.
\end{align*}
Next, we define recursively the following functions for $1 < \ell \leq \lfloor n \rfloor$,  

\[
F_{\ell} : B \rightarrow \mathbb{R}^{\ell}, \qquad F_{\ell}(q) := (F_{\ell-1}(q), d(p_{\ell-1},q)).
\]  

These functions are again Lipschitz, so as before, for $\mathcal{H}^{\ell}$--a.e. point $x \in \mathbb{R}^\ell$, we have that  $\mathcal{H}^{n-\ell}_{X}(F^{-1}_{\ell}(x)) < \infty$ and the $(n-\ell)$--Hausdorff dimension of $F^{-1}_{\ell}(x)\leq n-\ell$. The next point $p_\ell$ is chosen in one of such level sets. Observe that at each $\ell$, 
the orbit
\[
\left(G_{p_0}\cap \cdots \cap G_{p_{\ell-1}}\right)\cdot p_{\ell}  \subset 
F^{-1}_{\ell}(F_{\ell-1}(p_\ell), d(p_{\ell-1},p_\ell)),
\]
and since 
\[
\mathcal{H}^{n-\ell}_{X}(F^{-1}_{\ell}(F_{\ell-1}(p_\ell), d(p_{\ell-1},p_\ell))) < \infty,
\]
we get that
\[
\dim_T\left(G_{p_0}\cap \cdots \cap G_{p_{\ell-1}}\right)\cdot p_{\ell}\leq \lfloor n\rfloor-\ell.
\]

%

Notice that, since
\[
\left(G_{p_0}\cap \cdots \cap G_{p_{l-1}}\right)\cdot p_{l}\simeq \left(G_{p_0} \cap \cdots \cap G_{p_{l-1}}\right)/ \left(G_{p_0}\cap \cdots \cap G_{p_{l}}\right),
\]
we get
\begin{align*}
\dim_{T} \left(G_{p_0} \cap \cdots \cap G_{p_{l-1}}\right)  = &\dim_{T}\left( G_{p_0}\cap \cdots \cap G_{p_{l-1}}\right)\cdot p_{l}+\dim_{T} \left(G_{p_0}\cap \cdots \cap G_{p_{l}}\right)
\\
&\leq \lfloor n\rfloor -\ell + \dim_{T} \left(G_{p_0}\cap \cdots \cap G_{p_{l}}\right).
\end{align*}

So, after iterating the above procedure enough times, we reach

$$
\dim_{T}G_{p_0} \leq \frac{\lfloor n \rfloor ( \lfloor n \rfloor -1)}{2} + \dim_{T} \left(G_{p_0}\cap \cdots \cap G_{\lfloor n \rfloor -1}\right).
$$

Observe that $0 \leq n-\lfloor n \rfloor < 1,$ and that for  $\mathcal{H}^{\lfloor n \rfloor}$-a.e. point $x \in \mathbb{R}^{\lfloor n \rfloor}, $ $\mathcal{H}^{n-\lfloor n \rfloor}(F^{-1}_{\lfloor n \rfloor}(x)) < \infty $; this implies that $\dim_{T} F^{-1}_{\lfloor n \rfloor}(x) =0$. 
Then for all  points in the set  
\[ 
D:= \lbrace \, q \in B\, |\, \mathcal{H}^{n-\lfloor n \rfloor} (\,F^{-1}_{\lfloor n \rfloor}\,(F_{\lfloor n \rfloor -1}(q), d(p_{\lfloor n \rfloor -1}, q\,) \,) \,) < \infty \,\rbrace,
\] we have  

\begin{align*}
  \dim_{T} G_{p_0}\cap \cdots \cap G_{p_{\lfloor n \rfloor -1}} &= \dim_{T} G_{p_0}\cap \cdots \cap G_{p_{\lfloor n \rfloor -1}} (q) +\dim_{T}G_{p_0}\cap \cdots \cap G_{p_{\lfloor n \rfloor -1}}\cap G_q\\
  &= \dim_{T}G_{p_0}\cap \cdots \cap G_{p_{\lfloor n \rfloor -1}}\cap G_q.
\end{align*}   
We consider now the  connected components of the identity in these groups, that we denote $(G_{p_0}\cap \cdots \cap G_{p_{\lfloor n \rfloor -1}} )_{0}$ and $(G_{p_0}\cap \cdots \cap G_{p_{\lfloor n \rfloor -1}}\cap G_q)_{0}.$ Since they have the same topological dimension, they must be equal, so  
\[
\left(G_{p_0}\cap \cdots \cap G_{p_{\lfloor n \rfloor -1}}\right)_{0}\cdot q =q,
\]  
and all of $D$ is fixed by   
\[
(G_{p_0}\cap \cdots \cap G_{p_{\lfloor n \rfloor -1}})_{0}.
\]

 Now we prove that $D$ is dense in $B$: let $q \in B $ and $\epsilon >0$, and notice that for every $q' \in B_{q}(\epsilon, X)$  we have 
\[
\left|\, d(p_{l}, q) -d(p_{l},q' )\, \right| < \epsilon, \quad \text{for all } 1\leq \ell \leq \lfloor n \rfloor -1,
\] 
so we can easily find a $q' \in D\cap B_{q}(\epsilon, X) $. 

With this it is easy to see that the whole ball $B$ is fixed by $(G_{p_0}\cap \cdots \cap G_{p_{\lfloor n \rfloor -1}})_{0}$: for any $q \in B$, choose a sequence  $(q_{i})_{i=1}^{\infty} \subset D$ such that $q_{i } \rightarrow q$; then for $h \in (G_{p_0}\cap \cdots \cap G_{p_{\lfloor n \rfloor -1}})_{0}$, $q_{i} \rightarrow q$ implies that $hq_{i} \rightarrow q$.

Finally suppose that $(G_{p_0}\cap \cdots \cap G_{p_{\lfloor n \rfloor -1}})_{0}$ was not trivial. Then for a nonidentity element $h \in (G_{p_0}\cap \cdots \cap G_{p_{\lfloor n \rfloor -1}})_{0}$, there would exists $x \in X-B$ that is not fixed by $h$ and such that there exists a unique non-branching geodesic  $\gamma$ from $p_0$ to $x.$  Then $h\gamma$ is a geodesic from $p_0$ to $hx$, and for all $t \in [0,1)$ such that $\gamma_t \in B$, we have that $h\gamma_t\in B$ and  $\gamma_t = h\gamma_t$; this means that $\gamma$ must branch and we have a contradiction. 

We conclude that $(G_{p_0}\cap \cdots \cap G_{p_{\lfloor n \rfloor -1}})_{0}$ must be trivial and then 

$$
\dim_{T}G_{p_0}
\leq \frac{\lfloor n \rfloor( \lfloor n\rfloor -1) }{2}
 \leq \frac{\lfloor N \rfloor( \lfloor N \rfloor -1) }{2}. 
 $$

\end{proof}

\begin{rem}\label{rem:isotropy equality case}
In Section \ref{section:large isometry groups}, we will need to return to the proof of the above Proposition and consider the case when the equality is reached, i.e, 
\[
\dim_T G_{p_0} = \frac{\lfloor N \rfloor( \lfloor N \rfloor -1) }{2}.
\]
If this happens, then the above proof gives $\lfloor n\rfloor =\lfloor N\rfloor$.
\end{rem}

We are now ready to prove the second main result of this paper.

\dimensionbound*


\begin{proof}[Proof of Theorem \ref{thm:dimension}] Choose some $p_0\in X$, and consider  the \emph{Myers-Steenrod map} 
\[
F: G \rightarrow X, \quad  F(g) := gp_0.
\]
 It  is easy to see that it is continuous.  The quotient map $\pi: G \rightarrow G/G_{p_0}$  is a fibration, thus  we can find a compact subset $V$ of $G/G_{p_0}$ with nonempty interior, and a  continuous section $s: V \rightarrow G$ of $\pi$ passing through the identity.  The map  $\tilde{F} = F \circ s$ is injective and since $V$ is compact, it is a topological embedding. Then  the dimension of $V$, that agrees with that of $G/G_{p_0}$, is bounded above by ${\lfloor N \rfloor} $.

With this we conclude that 
\[
\dim G = \dim G/G_{p_0} + \dim G_{p_0} \leq \frac{{\lfloor N \rfloor} ({\lfloor N \rfloor} +1)}{2}
\]  
as desired.
\end{proof}

The Myers--Steenrod map was also used in \cite{GalGui} to obtain Theorem \ref{thm:dimension} for the case of Alexandrov spaces. 

\begin{rem}
\label{rem:compact_set_embedding}
Observe that the  above proof gives that there are compact sets $V$ in $G/G_{p_0}$ with nonempty interior that embed topologically in $X$.
\end{rem}

\begin{rem}
It is interesting to notice that the arguments in this section apply to measured metric spaces more general than $RCD^*(K,N)$-spaces; in fact, we only need for the space to have finite Hausdorff dimension, and such that any nontrivial isometry does not leave fixed an open set.  
\end{rem}

\begin{rem}
We could give an alternative proof to Theorem \ref{thm:dimension} using slices for the action of $\Iso(X)$ on $X$ that was pointed to us by A. Lytchak. The outline of the proof, with many details missing, is as follows: Since $G:=\Iso(X)$ is a Lie group and the action of $\Iso(X)$ on $X$ is proper, $X$ is a Cartan space in the terminology of \cite{Palais}. Proposition  2.3.1 in \cite{Palais} assures the existence of local slices for the action of $G$  through any point $p$ of $X$; moreover, by Proposition 2.1.4 in the same reference, the slice can be taken invariant for the action of the isotropy group of $p$.

Since $G$ is a Lie group, standard arguments give the existence of principal orbits for the action of $G$ on $X$. Choose some $p$ in such a principal orbit, and a 
slice $S$ through $p$ such that $G\cdot S$ is an open neighbourhood $U$ of $G/G_p$; once again, standard arguments show that for any point $x\in S$, $G_x=G_p$. 

Consider an arbitrary element $g\in G$ and its restricted action on $G\cdot p$; if it were trivial, then $g$ would fix the open set $U$, which is impossible because of Lemma \ref{lem.fixedpoints}. Thus, the action of $G$ on $G\cdot p$ is faithful and the required bound dimension follows easily.

\end{rem}

\section{$RCD^*(K,N)$-spaces with large isometry groups}
\label{section:large isometry groups}

In this section, we study what $RCD^*(K,N)$-spaces have isometry group of maximal dimension. We structure the proof in a series of Lemmas.

\begin{lem}
\label{lem:N_and_topological_dimension}
The topological dimension of any $RCD^*(K,N)$-space is less or equal than $\lfloor N\rfloor$.
\end{lem}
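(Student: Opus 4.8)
The plan is to chain together the two dimension inequalities already recorded in the preliminaries with the integrality of the topological dimension. First I would note that every $RCD^*(K,N)$-space is in particular a $CD^*(K,N)$-space, so Theorem \ref{thm:Sturm_bound_on_dimension} of Sturm applies and gives the Hausdorff bound $\dim_{\mathcal{H}} X \leq N$. Combining this with the general comparison $\dim_T X \leq \dim_{\mathcal{H}} X$ recorded in the dimension theory subsection (and cited from \cite{HuWall}) yields the real-valued inequality $\dim_T X \leq N$.

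The remaining, and decisive, observation is that $\dim_T X$ is by its very definition a nonnegative integer (the least $n$ for which every open covering of $X$ admits an open refinement of order at most $n+1$). Since an integer that is bounded above by the real number $N$ is automatically bounded above by its floor, we conclude $\dim_T X \leq \lfloor N\rfloor$, which is exactly the claim.

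I do not expect any genuine obstacle here: the statement is a purely formal consequence of facts already assembled in Section 2. The only points requiring a moment's care are that Sturm's bound is stated for the class $CD^*(K,N)$ and therefore covers the infinitesimally Hilbertian subclass $RCD^*(K,N)$, and that the inequality $\dim_T \leq \dim_{\mathcal{H}}$ is valid for arbitrary metric spaces, so that it applies to $X$. With these in hand the proof is immediate, and it is this lemma that will feed into the rigidity analysis of Section \ref{section:large isometry groups}.
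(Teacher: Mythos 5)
Your proof is correct and follows essentially the same route as the paper: Sturm's bound $\dim_{\mathcal{H}} X \leq N$ from Theorem \ref{thm:Sturm_bound_on_dimension}, the comparison $\dim_T X \leq \dim_{\mathcal{H}} X$, and the integrality of the topological dimension. The only cosmetic difference is that the paper justifies the middle inequality explicitly --- via Szpilrajn's inequality for the inductive dimension together with the coincidence of inductive and topological dimension on Polish spaces --- instead of quoting the inequality as recorded in the preliminaries.
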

\begin{proof}
Theorem \ref{thm:Sturm_bound_on_dimension} says that the Hausdorff dimension of $X$ does not exceed $N$; on the other hand, Szpilrajn's inequality gives that the Hausdorff dimension of $X$ is greater or equal than the inductive dimension of $X$ (see \cite[Chapter 7]{HuWall}). But for Polish spaces, the topological and the inductive dimension coincide (\cite[Ch.4, corollary 5.10]{Pears}).
\end{proof}

\begin{lem}
\label{lem:maxdimension and dimension orbits}
Let $(X, d_{X}, \mathfrak{m})$  be an $RCD^*(K,N)$-space
. If the dimension of the isometry group of  $X$ is ${\lfloor N \rfloor} ({\lfloor N \rfloor} +1)/2,$ then 
\begin{enumerate}
\item $X$ has topological dimension equal to  ${\lfloor N \rfloor} $;
\item each orbit $G\cdot p_0$ has topological dimension equal to  ${\lfloor N \rfloor} $;
\item $\dim G_{p_0}=\frac{\lfloor N \rfloor( \lfloor N \rfloor -1) }{2}$
\end{enumerate}
\end{lem}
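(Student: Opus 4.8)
The plan is to combine the two upper bounds established earlier with the dimension formula for the Myers--Steenrod fibration, observing that the hypothesis forces both bounds to be sharp simultaneously.

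First I would record the ambient structure. By Theorem \ref{thm:lie group} the group $G := \Iso(X)$ is a Lie group, by Theorem \ref{thm:general properties iso(X)} each isotropy group $G_{p_0}$ is compact, and the action of $G$ on $X$ is proper. Consequently, for every $p_0 \in X$ the orbit $G\cdot p_0$, with the subspace topology inherited from $X$, is homeomorphic to the homogeneous space $G/G_{p_0}$, which is a smooth manifold; in particular its topological dimension coincides with its manifold dimension, and the fibration $G \to G/G_{p_0}$ yields
\[
\dim G = \dim G/G_{p_0} + \dim G_{p_0}.
\]

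Next I would invoke the two ingredients that bound the summands. On one hand, Remark \ref{rem:compact_set_embedding} produces a compact set in $G/G_{p_0}$ with nonempty interior that embeds topologically in $X$, so that $\dim G/G_{p_0} \leq \dim_T X \leq \lfloor N \rfloor$ by Lemma \ref{lem:N_and_topological_dimension}. On the other hand, Proposition \ref{prop:isotropy upper bound} gives $\dim G_{p_0} \leq \frac{\lfloor N \rfloor(\lfloor N \rfloor - 1)}{2}$. Adding these,
\[
\dim G \leq \lfloor N \rfloor + \frac{\lfloor N \rfloor(\lfloor N \rfloor - 1)}{2} = \frac{\lfloor N \rfloor(\lfloor N \rfloor + 1)}{2}.
\]
Since the hypothesis asserts that equality holds on the left, both inequalities above must be equalities. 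This immediately gives claim (3), namely $\dim G_{p_0} = \frac{\lfloor N \rfloor(\lfloor N \rfloor - 1)}{2}$, together with the auxiliary identity $\dim G/G_{p_0} = \lfloor N \rfloor$.

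Finally I would read off claims (1) and (2). Using the homeomorphism $G\cdot p_0 \cong G/G_{p_0}$ from the first step, claim (2) follows: $\dim_T(G\cdot p_0) = \dim G/G_{p_0} = \lfloor N \rfloor$. For claim (1), the orbit $G\cdot p_0$ is a subspace of $X$, so $\dim_T X \geq \dim_T(G\cdot p_0) = \lfloor N \rfloor$, while Lemma \ref{lem:N_and_topological_dimension} gives the reverse inequality $\dim_T X \leq \lfloor N \rfloor$; hence $\dim_T X = \lfloor N \rfloor$. Since every step applies verbatim to an arbitrary base point, the conclusion holds for each orbit, as required. I do not anticipate a genuine obstacle: the content is entirely contained in the earlier Proposition and the dimension bound, and the only care needed is bookkeeping the coincidence of the topological, manifold, and Hausdorff notions of dimension for the Lie group $G$ and its homogeneous quotient, together with the orbit-to-coset-space homeomorphism guaranteed by properness.
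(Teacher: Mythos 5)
Your proposal is correct and takes essentially the same route as the paper: both combine the fibration formula $\dim G = \dim G/G_{p_0} + \dim G_{p_0}$, the embedding of Remark \ref{rem:compact_set_embedding}, the bound $\dim_T X \leq \lfloor N\rfloor$ of Lemma \ref{lem:N_and_topological_dimension}, and Proposition \ref{prop:isotropy upper bound} into one inequality chain whose equality case is forced by the hypothesis. The only (cosmetic) difference is in reading off claims (1) and (2): you do it directly from the forced equalities, the orbit--coset homeomorphism given by properness of the action, and subspace monotonicity of topological dimension, whereas the paper invokes Remark \ref{rem:isotropy equality case}; your route is, if anything, more self-contained.
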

\begin{proof}

For the Lie group action of $G= \Iso(X)$ on $X$, we have that 
\[
\dim G/G_p + \dim G_p =\dim G.
\]
 Since there are compact neighbourhoods in the orbit $G/G_p$ that embed topologically in $X$ (see Remark \ref{rem:compact_set_embedding}), we have that $\dim G/G_p\leq \dim_T X$, and then
\begin{equation}\label{eq:rigidity_sum}
\dfrac{{\lfloor N \rfloor} ({\lfloor N \rfloor} +1)}{2}=\dim G\leq 
\dim_T X + \dim G_p\leq {\lfloor N \rfloor} +\dim G_p.
\end{equation}
From Proposition \ref{prop:isotropy upper bound} and equation  \eqref{eq:rigidity_sum}, we deduce that
\[
\dim G_{p_0}=\dfrac{{\lfloor N \rfloor} ({\lfloor N \rfloor} -1)}{2}.
\]

Once we have this, the first two claims follow from Remark \ref{rem:isotropy equality case}.

%
%
%
\end{proof}

\begin{lem}
\label{lem:transitive}
If $X$ is an $RCD^*(K,N)$-space with isometry group $G$ of maximal dimension, then $G$ acts transitively on $X$.  
\end{lem}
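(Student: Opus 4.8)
The strategy is to show that every orbit is open; since $X$ is connected (as $\Iso(X)$ acts on a connected space in the standing hypotheses following Theorem~\ref{thm:general properties iso(X)}), a single open orbit must be all of $X$, giving transitivity. By Lemma~\ref{lem:maxdimension and dimension orbits}, when $G$ has maximal dimension, every orbit $G\cdot p$ has topological dimension equal to $\lfloor N\rfloor$, which also equals $\dim_T X$. The plan is to combine this equality of dimensions with the fact that $G$ acts properly, so that orbits are closed embedded subsets, and then argue that a closed subset of full topological dimension in a space of the same dimension must contain interior points.

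\medskip

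First I would fix $p_0\in X$ and recall from Remark~\ref{rem:compact_set_embedding} that there is a compact neighbourhood $V\subset G/G_{p_0}\cong G\cdot p_0$ with nonempty interior that embeds topologically into $X$ via the Myers--Steenrod map. Because the action of $G$ is proper (so orbits are closed) and $\dim_T(G\cdot p_0)=\dim_T X=\lfloor N\rfloor$, the image $\tilde F(V)$ is a compact subset of $X$ of topological dimension $\lfloor N\rfloor$. The key local fact I would invoke is an invariance-of-domain type statement: a subset of topological dimension $\lfloor N\rfloor$ sitting inside a space of topological dimension $\lfloor N\rfloor$ cannot be nowhere dense in a way compatible with the local structure coming from the stratification. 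More precisely, I would use that for $\mathfrak m$-a.e.\ point the tangent cone is $\mathbb R^{\lfloor N\rfloor}$ (Theorem~\ref{thm:stratification}), so near such a point $X$ is, at small scales, Gromov--Hausdorff close to Euclidean space of dimension $\lfloor N\rfloor$; the orbit through that point, carrying a top-dimensional compact piece, must then fill out a neighbourhood.

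\medskip

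Concretely, I would choose $p_0\in \mathcal R_{\lfloor N\rfloor}$ with unique Euclidean tangent cone (a full-measure condition). Passing to blow-ups $(X,d_{\lambda^{-1}X},\mathfrak m_\lambda,p_0)\to(\mathbb R^{\lfloor N\rfloor},0)$, the isotropy and orbit structure should pass to the limit by an equivariant Gromov--Hausdorff argument in the spirit of the proof of Theorem~\ref{thm:lie group} (using Theorem~\ref{thm:fukaya-yamaguchi}). In the limit the induced group acting on $\mathbb R^{\lfloor N\rfloor}$ has an orbit of dimension $\lfloor N\rfloor$, hence an open orbit, hence is transitive on $\mathbb R^{\lfloor N\rfloor}$; transferring this back shows $G\cdot p_0$ contains a neighbourhood of $p_0$, so $G\cdot p_0$ is open. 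Alternatively, and more cleanly, I would argue directly: the closed orbit $G\cdot p_0$ has the same topological dimension as $X$ at each of its points, and a proper closed subset of a connected, locally compact, finite-dimensional space whose complement is nonempty and open cannot have full dimension on a dense set without forcing the complement to lower the dimension somewhere—so the orbit cannot have empty interior. Once one orbit is open, properness forces all orbits to be open (they are translates), the orbits partition $X$ into disjoint open sets, and connectedness of $X$ yields a single orbit.

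\medskip

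\emph{Main obstacle.} The delicate point is the invariance-of-domain step in the nonsmooth setting: in a general metric space, a closed subset of full topological dimension need not have nonempty interior, so the argument genuinely requires the extra regularity of $RCD^*$-spaces—specifically the existence of Euclidean tangent cones on a full-measure set and the good behaviour of the submetry $\pi:X\to X/G$. The hardest part will be making rigorous the transfer of the open-orbit property from the Euclidean tangent cone back to $X$, i.e.\ ensuring that Gromov--Hausdorff closeness to $\mathbb R^{\lfloor N\rfloor}$ together with an almost-transitive group action at small scales really forces a genuine open orbit, rather than merely an orbit that is $\epsilon$-dense in small balls. I expect this to be handled by combining the displacement/equivariant-convergence machinery already developed with a dimension-counting argument that rules out the orbit being a proper closed subset of the same dimension.
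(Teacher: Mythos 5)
Your plan hinges on a step that you never actually supply: the passage from ``$G\cdot p_0$ has topological dimension $\lfloor N\rfloor=\dim_T X$'' to ``$G\cdot p_0$ is open''. You correctly flag this as the main obstacle, but neither of your two sketches closes it. The topological principle invoked in your ``cleaner'' variant --- that a closed subset of full topological dimension in a connected, locally compact, finite-dimensional space must have interior points --- is a theorem about subsets of $\mathbb{R}^n$ (Hurewicz--Wallman), not about general metric spaces, and at this stage $X$ is not known to be a manifold or homogeneous (that is precisely what is being proved). The blow-up variant has its own problems: first, the stratum $\mathcal{R}_{\lfloor N\rfloor}$ is not known to be nonempty --- Theorem \ref{thm:stratification} only says the union of \emph{all} strata has full measure, and indeed the paper's proof of Theorem \ref{thm:rigidity} is careful to work with an arbitrary nonempty stratum $\mathcal{R}_m$; second, even granting a Euclidean tangent cone and equivariant convergence via Theorem \ref{thm:fukaya-yamaguchi}, transitivity of the limiting action does not transfer back to $X$: Gromov--Hausdorff closeness at small scales only makes the orbit $\epsilon$-dense in small balls, which is exactly the deficiency you acknowledge and then leave unresolved. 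A secondary slip: distinct orbits are not ``translates'' of one another (translating an orbit by group elements reproduces the same orbit), so openness of one orbit does not formally propagate to the others; this is repairable because Lemma \ref{lem:maxdimension and dimension orbits} applies to every orbit, but as written your closing step is also incorrect.

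For contrast, the paper sidesteps invariance of domain entirely and argues measure-theoretically, by contradiction. If there were two distinct orbits, Corollary \ref{cor:geodesic lift} lifts a geodesic of $X/G$ to a geodesic $\gamma$ of $X$ joining them; the Lipschitz function $f=d(\cdot\,,G\cdot p_0)$, restricted to the image of $\Phi(g,t)=g\cdot\gamma(t)$, has the orbits $G\cdot\gamma(t)$ as its level sets, and the coarea-type inequality of \cite[Proposition 3.1.5]{AmbTil} forces almost every such orbit to have Hausdorff dimension at most $n-1\leq N-1<\lfloor N\rfloor$, where $n=\dim_{\mathcal{H}}X$. Since every orbit has topological dimension $\lfloor N\rfloor$ by Lemma \ref{lem:maxdimension and dimension orbits}, and topological dimension never exceeds Hausdorff dimension, this is a contradiction. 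In other words, the paper converts ``more than one orbit'' into a one-parameter family of level sets and detects the resulting dimension drop by coarea, rather than trying to show that any single orbit is open. If you want to salvage your approach, the missing ingredient is precisely such a quantitative replacement for invariance of domain.
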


\begin{proof} 
We will proceed by contradiction. Suppose there are two different orbits $G\cdot p_0$ and $G\cdot p_1$. From 
Corollary \ref{cor:geodesic lift}
, we know that there is a geodesic $\gamma^*:[0,\ell]\to X/G$ connecting $G\cdot p_0$ to $G\cdot p_1$ that lifts to a geodesic
$\gamma:[0,\ell]\to X$ with $\gamma(0)=p_0$ and $\gamma(\ell)\in G\cdot p_1$. Without loss of generality, we can assume that $\gamma(\ell)=p_1$. 

Consider the map
\[
\Phi:G\times [0,\ell]\to X, \qquad \Phi(g,t)=g\cdot \gamma(t).
\] 
Observe that for fixed $t$, the image $\Phi(G,t)$ is the orbit of $G$ through $\gamma(t)$, and coincides with the set of points in the image of $\Phi$ at distance $t$ from $G\cdot p_0$. 

Choose the function $f:X\to\mathbb{R}$, $f(p)=d(p,G\cdot p_0)$, and restrict it to $\Img\Phi$; its level sets are clearly the orbits $G\cdot\gamma(t)$.  Since $f$ is Lipschitz, we can use again \cite[Proposition 3.1.5]{AmbTil} to conclude that almost every orbit $G\cdot \gamma(t)$ has Hausdorff dimension less or equal to $n-1$, where $n$ is the Hausdorff dimension of $X$ (technically, we should restrict $f$ to a compact subset of $\Img \Phi$ with non empty interior; this can be easily reached using a section of the fibration $G\to G/G_{p_0}$, but we will skip details to facilitate the reading).   

Using Lemma \ref{lem:maxdimension and dimension orbits}, we get  the chain of inequalities
\[
\dim_{\mathcal{H}}G\cdot\gamma(t)\leq n-1\leq N-1 <\lfloor N\rfloor=\dim_T G\cdot\gamma(t)
\]
which gives a contradiction.
\end{proof}

We are finally ready to prove the last  main result of the paper:

\maximaldimension*
\begin{proof}
Choose some $p_0\in\mathcal{R}_m$ where $m \in  [1, N] \cap \mathbb{N}.$
Lemma \ref{lem:transitive}  says that $G$ acts transitively on $X$, therefore $X$ is homeomorphic to $G/G_{p_0}$ and locally contractible.  Berestovskii \cite[Theorem 3]{Ber2} characterizes $X$ as isometric to the quotient of $G$ by $G_{p_0}$ endowed with a Carnot-Carath\'eodory-Finsler metric $d_{cc}$ that corresponds to a $k-$dimensional tangent distribution $\Delta$, and a norm $\mathcal{F}$ defined over it. In order to ensure that the metric is actually Riemannian, we now look at the tangent spaces corresponding to these structures. 

On the one hand, the tangent spaces of $X$  at $p_{0}$ are of the form $(\mathbb{R}^{m },d_{E},\mathcal{L}^m,0)$, obtained by means of measured Gromov-Hausdorff convergence; note that the Hausdorff dimension of this space is ${m} $.

On the other hand,  the tangent spaces of spaces of the form $(G/G_{p_0},d_{cc})$ are Carnot groups whose Hausdorff dimension is, by \cite{Montg} , greater or equal than ${\lfloor N \rfloor} $, with equality being achieved if and only if the dimension of $\Delta$ is exactly ${\lfloor N \rfloor} $. Note that this tangent space is obtained by using Gromov-Hausdorff convergence without any requirement on a measure.

By uniqueness of the tangent spaces we have that these must be isometric, so they must have the same Hausdorff dimension. It follows then that the distribution $\Delta$ has dimension ${\lfloor N \rfloor} $. From this we have by Theorem $7$ of \cite{Ber2} that $X$ is actually a Finsler manifold; since it is also an $RCD^{*}(K,N)$-space, it must have quadratic Cheeger energy, and therefore be in fact a Riemannian manifold \cite{AmbGigSav}. The Theorem follows now from Theorem $3.1$ of \cite{Koba}, that characterizes Riemannian manifolds with maximal isometry groups.

Finally note that by \cite{CheegCol2}  the measure $\mathfrak{n}$ that we give to the resulting Riemannian manifold and the ${\lfloor N \rfloor} -$dimensional  Hausdorff measure are mutually absolutely continuous. 
\end{proof}

\end{document}